\documentclass[a4paper,12pt,reqno]{amsart} 
\usepackage{amssymb,amsthm,amsmath}
\usepackage{mathtools}
\usepackage{amsfonts}
\usepackage{amscd}
\usepackage{amssymb}

\usepackage{enumerate}
\usepackage{bbm}
\usepackage{graphicx}
\allowdisplaybreaks
\usepackage{mathabx}
\usepackage{color}
\usepackage{amsbsy}
\usepackage{graphicx}
\usepackage{amsthm}
\usepackage{amsmath}
\usepackage{amsxtra}
\usepackage{mathrsfs}
\usepackage{bbm}
\usepackage{dsfont}
\usepackage{enumitem}
\usepackage{comment}

\usepackage{enumerate}
\usepackage{xcolor}
\usepackage{float} 

\usepackage{url}
\usepackage{soul}
\usepackage[hidelinks]{hyperref} 

\usepackage{ifthen}

\hypersetup{
    colorlinks=true,
    linkcolor=red,      
    citecolor=red,       
    urlcolor=cyan        
}

\newtheorem{theorem}{Theorem}[section]

\newtheorem{thm}{Theorem}[subsection]
\newtheorem{defn}[thm]{Definition}
\newtheorem{remark}[theorem]{Remark}

\newtheorem{corollary}[theorem]{Corollary}

\newtheorem{lemma}[theorem]{Lemma}

\numberwithin{equation}{section}
\definecolor{red}{rgb}{1.0, 0.0, 0.0}
\newcommand{\Bea}{\begin{eqnarray*}}
	\newcommand{\Eea}{\end{eqnarray*}}
\newcommand{\Be} {\begin{equation*}}
	\newcommand{\Ee} {\end{equation*}}
\newcommand{\be} {\begin{equation}}
	\newcommand{\ee} {\end{equation}}
\newcommand{\bea} {\begin{eqnarray}}
	\newcommand{\eea} {\end{eqnarray}}

\newcommand{\la}{\lambda}

\usepackage{color}

\title[ Boundedness of multipliers for the Strichartz Fourier transform ]{ Boundedness of Fourier Multipliers and Applications to Nonlinear PDEs for the Strichartz Fourier Transform on the Heisenberg Group}
\author[A. Dasgupta]{Aparajita Dasgupta}
\address{
	Aparajita Dasgupta:
	\endgraf
	Department of Mathematics
	\endgraf
	Indian Institute of Technology Delhi, Hauz Khas
	\endgraf
	New Delhi-110016 
	\endgraf
	India
	\endgraf
	{\it E-mail address:} {\rm adasgupta@maths.iitd.ac.in}
}
\author[P. Gulia]{Prerna Gulia}
\address{
	Prerna Gulia:
	\endgraf
	Department of Mathematics
	\endgraf
	Indian Institute of Technology Delhi, Hauz Khas
	\endgraf
	New Delhi-110016 
	\endgraf
	India
	\endgraf
	{\it E-mail address:} {\rm prernagulia64@gmail.com}
}

\subjclass{Primary 43A85,43A22; Secondary 42C05, 33C45.}
\keywords{Heisenberg group, Strichartz Fourier transform, Fourier multipliers, Laguerre functions}

\vspace{1cm}

\allowdisplaybreaks
\begin{document}
\begin{abstract}
We investigate Fourier multipliers associated with the Strichartz Fourier transform on the Heisenberg group. In particular, we establish H\"ormander-type $L^{p}-L^{q}$ boundedness results for the range $1<p\leq 2\leq q<\infty$. The analysis is based on deriving suitable analogues of the Hausdorff-Young and Paley inequalities for the Strichartz Fourier transform, followed by interpolation arguments to obtain the desired multiplier estimates. As an application, we study the local well-posedness of certain nonlinear partial differential equations. Furthermore, we establish an $L^{p}$-boundedness theorem for Fourier multipliers associated with the Strichartz Fourier transform for the full range $1<p<\infty$.

\end{abstract}
\maketitle

\section{introduction}
The study of bounded translation-invariant operators on \(L^p\) spaces is a central theme in harmonic analysis. In his seminal work, H\"ormander \cite{hormander-estimates_of_translation} initiated the systematic study of such operators on \(\mathbb{R}^n\) via the Euclidean Fourier transform, giving rise to the theory of Fourier multipliers. It is well known that a Fourier multiplier operator can be bounded from \(L^p(\mathbb{R}^n)\) to \(L^q(\mathbb{R}^n)\) only when \(p\leq q\). Two fundamental results in this direction are H\"ormander’s multiplier theorem \cite{hormander-estimates_of_translation}, established for the range \(1<p\leq 2\leq q<\infty\), and Lizorkin’s multiplier theorem \cite{Lizorkin}, which is valid for \(1<p\leq q<\infty\). The latter, however, requires stronger smoothness assumptions on the multiplier symbol. Motivated by this distinction, the present work is primarily devoted to the study of H\"ormander-type multiplier theorems.

We now recall H\"ormander’s multiplier theorem on \(\mathbb{R}^n\). Let \(\sigma:\mathbb{R}^n\to \mathbb{C}\) be a bounded function and consider the Fourier multiplier operator \(T_{\sigma}\), initially defined on the Schwartz space \(S(\mathbb{R}^n)\), by
\begin{equation*}
    \widehat{T_{\sigma}f}(\xi)=\sigma(\xi)\widehat{f}(\xi), \qquad \xi\in \mathbb{R}^n,
\end{equation*}
where \(\widehat{f}\) denotes the Euclidean Fourier transform of \(f\).

H\"ormander proved that if \(1<p\leq 2\leq q<\infty\) and the symbol \(\sigma\) satisfies
\begin{equation*}
    \sup_{\alpha>0}\alpha
    \left(
    \int_{\{\xi:\,|\sigma(\xi)|>\alpha\}} d\xi
    \right)^{1/p-1/q}
    <\infty,
\end{equation*}
then the operator \(T_{\sigma}\) extends to a bounded operator from \(L^p(\mathbb{R}^n)\) to \(L^q(\mathbb{R}^n)\).



The theory of \(L^p-L^q\) multipliers for locally compact groups and compact homogeneous manifolds has been studied in \cite{Rauan-Ruzhansky_multipliers} and \cite{michael-hardy-littlewoord-Lp-Lq-compact-manifolds} respectively. The authors observed that H\"{o}rmander's multiplier theorem can be reformulated in the language of Lorentz spaces and group von Neumann algebras. Rather than studying a Fourier multiplier solely through its symbol, one may regard the multiplier itself as an operator belonging to an appropriate noncommutative function space. The importance of this reformulation lies in the fact that it is not tied to the Euclidean structure of $\mathbb{R}^{n}$. Consequently, it provides a framework for extending H\"{o}rmander-type multiplier theorems to a much broader class of settings, including unimodular locally compact groups and homogeneous spaces. 
Subsequently, Cardona et al.~\cite{cardona-vishvesh-ruzhansky-multipliers} investigated the \(L^p-L^q\) boundedness of pseudo-differential operators on smooth manifolds, while Kumar and Ruzhansky \cite{Vishvesh-Ruzhansky-multipliers_compact_hypergroups} also developed an analogous theory for compact hypergroups. Multiplier operators associated with the anharmonic oscillator were studied by Chatzakou and Kumar \cite{chatzakou-kumar-anharmonic-multipliers}, and later Kumar and Ruzhansky \cite{Vishvesh-ruzhsnaky-k-a_multipliers} established \(L^p-L^q\) boundedness results for \((k,a)\)-Fourier multipliers together with applications to nonlinear partial differential equations, similar to the results in \cite{cardona-vishvesh-ruzhansky-multipliers}. The corresponding problem for the noncommutative torus was investigated in \cite{shaimardan-tulenov-noncomm_torus}. We refer the reader to the references therein for further developments and related results. In the present work, we focus on the setting of the Heisenberg group.


The Heisenberg group \(\mathbb{H}^n\) is one of the most fundamental examples of a non-abelian nilpotent Lie group. Its underlying manifold is \(\mathbb{C}^n\times \mathbb{R}\), equipped with the group law
\[
(z,t)(w,s)=\left(z+w,t+s+\frac{1}{2}\operatorname{Im}(z\cdot \overline{w})\right),
\]
where \((z,t),(w,s)\in \mathbb{C}^n\times \mathbb{R}\).

For a suitable function \(f\) on \(\mathbb{H}^n\), the Fourier transform of \(f\) is an operator-valued function acting on \(L^2(\mathbb{R}^n)\), defined by
\[
\widehat{f}(\lambda)=\int_{\mathbb{C}^n\times \mathbb{R}}f(z,t)\pi_{\lambda}(z,t)\,dz\,dt,
\]
where, for \(\lambda\in \mathbb{R}^{\ast}\), \(\pi_{\lambda}\) denotes the Schr\"odinger representation of \(\mathbb{H}^n\) on \(L^2(\mathbb{R}^n)\). More explicitly, for \(\varphi\in L^2(\mathbb{R}^n)\),
\[
\pi_{\lambda}(z,t)\varphi(\xi)
=e^{i\lambda t}e^{i\lambda\left(x\cdot \xi+\frac12 x\cdot y\right)}
\varphi(\xi+y),
\]
where \(z=x+iy\).

It follows directly from the definition that
\[
\widehat{f}(\lambda)
=\int_{\mathbb{C}^n}f^{\lambda}(z)\pi_{\lambda}(z,0)\,dz,
\]
where \(f^{\lambda}\) denotes the inverse Fourier transform of \(f\) in the central variable, namely,
\[
f^{\lambda}(z)
=\int_{-\infty}^{\infty}f(z,t)e^{i\lambda t}\,dt.
\]

Apart from the \(L^p-L^q\) multiplier theorem, H\"ormander and Mikhlin \cite{hormander-estimates_of_translation,michlin-multipliers} established \(L^p\)-boundedness results for Fourier multipliers, which subsequently motivated extensive developments in several mathematical settings; see for instance \cite{Anker-multipliers,Hejna-multipliers_dunkl_transform,multiplier-hankel,Soltani-multipliers_dunkl}. The study of \(L^p\) Fourier multipliers on the Heisenberg group associated with the classical operator-valued Fourier transform was initiated by Mauceri and Michele \cite{Mauceri-heisenberg-multipliers}, who established the \(L^p\) multiplier theorem in the case \(n=1\). Their approach relied on a detailed analysis of the operator-valued Fourier transform together with the associated difference-differential operators arising from the representation theory of the Heisenberg group.

This work was later extended to higher dimensions by Lin \cite{Lin-multipliers-Heisenberg-group}, who introduced suitable difference-differential operators adapted to the Fourier transform through Fock space representations and established both \(L^p\) multiplier theorems and Hardy space estimates. More recently, Bagchi \cite{bagchi-multipliers} derived explicit formulas for these difference-differential operators and used them to obtain a significantly simpler proof of the multiplier theorem on the Heisenberg group. We refer the reader to \cite{bagchi-multipliers,Muller-Marcinkiewicz_multipliers-I,Muller-Marcinkiewicz_multipliers-II,Thangavelu-multiplier_sublaplacian} for further developments on multipliers associated with the Heisenberg group and the more general class of H-type groups.

In 2023, Thangavelu \cite{Strichartz_Fourier_thangavelu} introduced a new formulation of the Fourier transform on the Heisenberg group, namely the \emph{Strichartz Fourier transform}. This transform is scalar-valued and is constructed using the joint eigenfunctions of the sublaplacian \(\mathcal{L}\) and the vector field \(T=\frac{\partial}{\partial t}\). More precisely, it is built upon the joint spectral decomposition of the commuting operators \(\mathcal{L}\) and \(T\), and therefore provides an alternative spectral realization of harmonic analysis on the Heisenberg group.

The classical Fourier transform on the Heisenberg group, arising from the Schr\"odinger representations, is operator-valued and enjoys fundamental analytical properties such as the inversion formula and the Plancherel theorem. However, its operator-valued nature often introduces substantial technical difficulties in applications. In particular, it becomes less convenient in problems involving the characterization of function spaces, weighted inequalities, uncertainty principles, and other questions where scalar-valued transforms are more natural. The Strichartz Fourier transform overcomes these limitations by providing a scalar-valued analogue while retaining the essential harmonic analytic structure of the Heisenberg group.

Since its introduction, the Strichartz Fourier transform has already led to several developments in harmonic analysis. In collaboration with Pusti and Thangavelu, the authors established weighted norm inequalities for the Strichartz Fourier transform in \cite{Gulia-weighted_norm_inequalities}. Subsequently, in joint work with Dabra \cite{Gulia-strichartz_uncertainty}, several uncertainty principles in the Strichartz setting were obtained. These developments demonstrate that the transform provides a fruitful framework for extending classical harmonic analytic questions to the Heisenberg group.

Another remarkable feature of the Strichartz Fourier transform is its close resemblance to the Helgason Fourier transform on noncompact Riemannian symmetric spaces. Both transforms are scalar-valued and arise from spectral decompositions associated to invariant differential operators in respective spaces. This analogy naturally suggests the possibility of developing a multiplier theory for the Strichartz Fourier transform analogous to that available for the Helgason Fourier transform.

Motivated by this connection, we briefly recall some developments in multiplier theory on Riemannian symmetric spaces. In his seminal work, Anker \cite{Anker-multipliers} established a H\"ormander--Mikhlin type \(L^p\) multiplier theorem for general noncompact Riemannian symmetric spaces, extending the earlier results of Clerc and Stein \cite{Clerc-Stein-multipliers_symmetric_spaces}. This line of research was subsequently developed in \cite{Ionescu-singular_integraks_symmetric_spaces,Wrobel-multipliers_symm_spaces}. More recently, Rana and Ruzhansky \cite{Tapendu-multipliers} established H\"ormander-type \(L^p-L^q\) multiplier theorems for Riemannian symmetric spaces of noncompact type by developing suitable Hausdorff-Young-Paley inequalities and interpolation techniques.

Despite the recent progress in the analysis of the Strichartz Fourier transform, the theory of Fourier multipliers in this framework remains completely undeveloped. To the best of our knowledge, no multiplier theorem has been established for the Strichartz Fourier transform. In particular, analogues of H\"ormander-type \(L^p-L^q\) multiplier theorems, \(L^p\)-multiplier theorems, or multiplier applications to nonlinear equations are absent from the literature. Thus, the present work initiates the systematic study of Fourier multipliers associated with the Strichartz Fourier transform on the Heisenberg group.

The novelty of this work is twofold. First, we develop the multiplier theory in the setting of the Strichartz Fourier transform, thereby extending harmonic analysis in this framework beyond weighted inequalities and uncertainty principles. Second, unlike the classical multiplier theory on the Heisenberg group, which fundamentally relies on the operator-valued Fourier transform and the associated representation-theoretic machinery, the present setting requires an entirely different approach due to the scalar-valued nature of the transform. Consequently, the multiplier techniques developed for the classical Fourier transform on the Heisenberg group, based on representation-theoretic methods and difference-differential operators, are not directly adaptable to the Strichartz framework due to its scalar-valued nature.

To overcome these difficulties, we develop new analytical tools adapted to the Strichartz framework. In particular, we establish suitable analogues of the Hausdorff--Young and Paley inequalities for the Strichartz Fourier transform, which are of independent interest. Combining these inequalities with interpolation methods inspired by multiplier theory on symmetric spaces, we prove H\"ormander-type \(L^p-L^q\) boundedness results for Fourier multipliers. Furthermore, we establish \(L^p\)-boundedness theorems and apply the multiplier results to study local well-posedness of certain nonlinear partial differential equations on the Heisenberg group.

Thus, this work not only provides the first multiplier theory for the Strichartz Fourier transform but also reveals a new interaction between harmonic analysis on the Heisenberg group, multiplier theory on symmetric spaces, and nonlinear partial differential equations.
As mentioned earlier, general multiplier framework developed in \cite{Rauan-Ruzhansky_multipliers} already encompasses the Heisenberg group through the theory of left invariant operators affiliated with the group von Neumann algebra, the present work is formulated in the setting of the scalar valued Strichartz Fourier transform. As an application of \(L^p-L^q\) boundedness theorem, we study local well-posedness of nonlinear PDEs. Similar results have been obtained for smooth manifolds in \cite{cardona-vishvesh-ruzhansky-multipliers}, for \((k,a)\)-Fourier transform in \cite{Vishvesh-ruzhsnaky-k-a_multipliers} and for noncommutative torus in \cite{shaimardan-tulenov-noncomm_torus}.


We conclude the introduction by giving a brief outline of the paper. Section \ref{sec2} provides the necessary background on the Strichartz Fourier transform, mixed norm spaces, Lorentz spaces and interpolation theorems. In Section \ref{sec3}, we establish \(L^p-L^q\) boundedness of Fourier multiplier for the Strichartz Fourier transform and use it to study well-possedness of nonlinear PDEs in Section \ref{sec4}.
Lastly, in Section \ref{sec5}, we obtain \(L^p\) boundedness of the Fourier multiplier for the Strichartz Fourier transform.
\section{preliminaries}\label{sec2}
In this section, we recall some basic results from harmonic analysis on the Heisenberg 
group that play an important role in the study of the Strichartz Fourier transform.

The Heisenberg group is a two-step nilpotent Lie group equipped with the Lebesgue measure \(dz\,dt\), which serves as its Haar measure. The representation theory of \(\mathbb{H}^n\) is well understood, primarily due to the Stone--von Neumann theorem. For a comprehensive account of the representation theory of \(\mathbb{H}^n\), we refer the reader to \cite{folland-harmonic_phase_space, thangaveluheisenberg}.

We denote by \(\mathfrak{h}_n\) the Heisenberg algebra, whose basis is formed by the following left-invariant vector fields
\begin{equation*}
    X_i=\frac{\partial}{\partial x_i}+\frac{1}{2}y_i\frac{\partial}{\partial t},\; Y_i=\frac{\partial}{\partial y_i}-\frac{1}{2}x_i\frac{\partial}{\partial t},\; T= \frac{\partial}{\partial t},\; i=1,2,\dots , n.
\end{equation*}
The sublaplacian on \(\mathbb{H}^n\) is then defined as 
$$
\mathcal{L}:=-\sum_{i=1}^n\left( X_i^2+Y_i^2 \right)
$$
and has explicit form
$$
\mathcal{L} = -\Delta_{\mathbb{C}^n}
- \frac{1}{4} |z|^2 \frac{\partial^2}{\partial t^2}
+ N \frac{\partial}{\partial t},
$$
where $\Delta_{\mathbb{C}^n}$ denotes the Laplacian on $\mathbb{C}^n$ and
$$
N = \sum_{j=1}^n \left( x_j \frac{\partial}{\partial y_j}
- y_j \frac{\partial}{\partial x_j} \right).
$$ 
It is known that \(\mathcal{L}\) commutes with the vector field \(T\), allowing the study of the joint spectral theory of these two operators. In this framework, Laguerre functions arise naturally as they characterize the joint eigenfunctions of $\mathcal{L}$ and the left invariant vector field $T$. We now recall the definition of the Laguerre functions.

\noindent For \(\delta >-1\) and \(k \in \mathbb{N} \cup \{0\}\), let \(L^{\delta}_k\) denote the Laguerre polynomial of type \(\delta\) and degree \(k\), defined as
$$
L^{\delta}_k(t)=\frac{t^{-\delta}e^t}{k!}\left ( \frac{d}{dt} \right )^k \left( e^{-t}t^{k+\delta}\right),\; t\geq 0.
$$
For \(\lambda \in \mathbb{R}^*=\mathbb{R}\setminus \{0\}\), let \(\varphi^{n-1}_{k,\lambda}\) denote the Laguerre function (for more details see \cite{thangaveluheisenberg}) on \(\mathbb{C}^n\) defined as 
$$
\varphi^{n-1}_{k,\lambda}(z)=L^{n-1}_k\left( \frac{1}{2}|\lambda||z|^2\right) e^{-\frac{1}{4}|\lambda||z|^2}.
$$
It is easy to check that 
\begin{equation}\label{twonormofLF}
    \int_{\mathbb{C}^n}|\varphi^{n-1}_{k,\la}(z)|^2\,dz = (2\pi)^n |\la|^{-n} \frac{(k+n-1)!}{k!\,(n-1)!}.
\end{equation}
A detailed study of the eigenfunctions of the sublaplacian can be found in \cite{thangavelu-uncertainty-book}. 
In particular, the functions \(e^{n-1}_{k,\lambda}(z,t):=e^{i \lambda t}\varphi^{n-1}_{k,\lambda}(z)\) arise as joint eigenfunctions of the operators \(\mathcal{L}\) and \(T\). More precisely, they satisfy the following eigenvalue equations
$$
\mathcal{L}(e^{n-1}_{k,\lambda})=(2k+n)\lvert \lambda \rvert \,e^{n-1}_{k,\lambda} \;\;\;\; \text{and} \;\;\;-i\frac{\partial}{\partial t}(e^{n-1}_{k,\lambda})=\la \, e^{n-1}_{k,\lambda} .
$$

We next describe the Strichartz Fourier transform and the associated Heisenberg fan. Let \(\Omega\) denote the Heisenberg fan, given by the union of rays 
\[
R_{k}
:= \big\{(\lambda,(2k+n)|\lambda|)\,:\, \lambda \in \mathbb{R}^{\ast}\big\},
\qquad k = 0,1,2,\ldots,
\]
along with the limiting ray
\[
R_{\infty}
:= \big\{(0,\tau): \tau \ge 0 \big\}.
\]
According to \cite[Chapter 3]{thangaveluheisenberg}, the \(U(n)\)-spherical functions on \(\mathbb{H}^{n}\) are classified into the following two families:
\[
\frac{k!\,(n-1)!}{(k+n-1)!}
\, e^{\,i\lambda t}\,
\varphi^{\,n-1}_{k,\lambda}(z),
\qquad \text{and} \qquad
2^{\,n-1}(n-1)!
\,\frac{J_{n-1}(\sqrt{\tau}\,|z|)}{(\sqrt{\tau}\,|z|)^{n-1}},
\]
where \(J_{n-1}\) denotes the Bessel function of order \(n-1\), and 
\(\varphi^{\,n-1}_{k,\lambda}\) denotes the Laguerre function of type \(n-1\).

Using these spherical functions, Thangavelu \cite{Strichartz_Fourier_thangavelu} introduced the Strichartz Fourier transform. More precisely, for \(f\in L^{1}(\mathbb{H}^{n})\cap L^{2}(\mathbb{H}^{n})\), the Strichartz Fourier transform of \(f\) is defined on
\(\Omega \times \mathbb{C}^{n}\) as
\begin{equation}\label{sft def1}
    \widehat{f}(a,w)
    =
    \int_{\mathbb{H}^{n}}
        f(z,t)\,
        e_{a}\!\big((z,t)^{-1}(w,0)\big)
    \, dz\,dt,
\end{equation}
where \( a = (\lambda,(2k+n)|\lambda|) \in R_{k} \) and \( e_{a}(z,t) := e^{\,n-1}_{k,\lambda}(z,t)\). 

\noindent Furthermore, for an element \((0,\tau) \in R_{\infty}\), the Strichartz Fourier transform of \(f\) is given by  
\begin{equation*}
    \widehat{f}(0,\tau,w)
    = 2^{n-1}(n-1)! 
      \int_{\mathbb{H}^{n}} 
        f(z,t)\,
        \frac{J_{n-1}\!\left(\sqrt{\tau}\,|w-z|\right)}
             {(\sqrt{\tau}\,|w-z|)^{\,n-1}}\,dz\,dt.
\end{equation*}
We next recall the inversion formula and the Plancherel theorem for the Strichartz Fourier transform \cite{Strichartz_Fourier_thangavelu}. For this purpose, we first define a measure on the Heisenberg fan \(\Omega\). Since \(\Omega\) is a subset of \(\mathbb{R}^{2}\), it is naturally endowed with the induced Euclidean topology.
We define a measure \(\nu\) on the Heisenberg fan \(\Omega\) by
\begin{equation}\label{nu_def}
    \int_{\Omega} \phi(a)\, d\nu(a)
    := (2\pi)^{-2n-1}
       \int_{-\infty}^{\infty}
          \left(
             \sum_{k=0}^{\infty}
             \phi\big(\lambda,(2k+n)|\lambda|\big)
          \right)
          |\lambda|^{2n}\, d\lambda,
\end{equation}
for suitable functions \(\phi\) on \(\Omega\).

\begin{theorem}
For \(f \in \mathcal{S}(\mathbb{H}^n)\), the inversion formula for the Strichartz Fourier transform is given by
\begin{equation*}
f(z,t)
=
\int_{\Omega \times \mathbb{C}^n}
\widehat{f}(a,w)\,
e_a\!\left((-w,0)(z,t)\right)
\,dw\, d\nu(a).
\end{equation*}
Moreover, if \(f \in (L^1 \cap L^2)(\mathbb{H}^n)\), then the following Plancherel identity holds:

\begin{equation*}
    \int_{\mathbb{H}^n}|f(z,t)|^2 \, dz\,dt=\int_{\Omega \times \mathbb{C}^n}|\widehat{f}(a,w)|^2 \, dw \, d\nu(a).
\end{equation*}
\end{theorem}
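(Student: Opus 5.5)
The plan is to reduce everything to the classical Fourier analysis on $\mathbb{H}^n$ in the central variable together with the special Hermite (Laguerre) expansion on $\mathbb{C}^n$.

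\textbf{Step 1: rewrite the transform.} Fix $a=(\lambda,(2k+n)|\lambda|)\in R_k$. Since $(z,t)^{-1}(w,0)=(w-z,-t+\tfrac12\operatorname{Im}(-z\cdot\bar w))$, integrating first in $t$ gives
\[
\widehat f(a,w)=\int_{\mathbb{C}^n} f^{-\lambda}(z)\,e^{-\frac{i\lambda}{2}\operatorname{Im}(z\cdot\bar w)}\varphi^{n-1}_{k,\lambda}(w-z)\,dz = f^{-\lambda}\ast_{-\lambda}\varphi^{n-1}_{k,\lambda}(w),
\]
a $\lambda$-twisted convolution, with the sign conventions adjusted as needed. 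Here $f^{\lambda}$ denotes the partial Fourier transform in $t$ introduced in the introduction. Thus, for fixed $\lambda$, $w\mapsto\widehat f(a,w)$ is, up to the constant $(2\pi)^{-n}|\lambda|^n$, the $k$-th spectral projection $P_k(\lambda)f^{-\lambda}$ of the special Hermite expansion of $f^{-\lambda}$ (see \cite{thangaveluheisenberg}). Concretely,
\[
g=(2\pi)^{-n}|\lambda|^n\sum_{k} g\ast_\lambda\varphi^{n-1}_{k,\lambda},
\]
and these projections are mutually orthogonal.

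\textbf{Step 2: Plancherel.} For fixed $\lambda$, orthogonality of the projections gives
\[
\sum_k \|g\ast_\lambda\varphi_{k,\lambda}\|_2^2=(2\pi)^{2n}|\lambda|^{-2n}\|g\|_2^2 .
\]
Multiplying by $(2\pi)^{-2n-1}|\lambda|^{2n}$, integrating over $\lambda$, and applying Euclidean Plancherel in $t$ (which contributes $\int\|f^{\lambda}\|_2^2\,d\lambda=2\pi\|f\|_2^2$) yields the identity exactly with the normalization \eqref{nu_def}. The ray $R_\infty$ has $\nu$-measure zero, so it does not contribute.

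\textbf{Step 3: inversion.} For $f\in\mathcal S$, reverse the computation. We have $e_a((-w,0)(z,t))=e^{i\lambda t}e^{\frac{i\lambda}{2}\operatorname{Im}(\cdot)}\varphi_{k,\lambda}(z-w)$, so
\[
\int_{\mathbb{C}^n}\widehat f(a,w)\,e_a((-w,0)(z,t))\,dw=e^{i\lambda t}\,\bigl(f^{-\lambda}\ast\varphi_{k,\lambda}\ast\varphi_{k,\lambda}\bigr)(z).
\]
The identity $\varphi_{k,\lambda}\ast_\lambda\varphi_{k,\lambda}=(2\pi)^n|\lambda|^{-n}\varphi_{k,\lambda}$ (consistent with \eqref{twonormofLF}) collapses this to a multiple of $e^{i\lambda t}P_k(\lambda)f^{-\lambda}(z)$. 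Summing over $k$ recovers $f^{-\lambda}(z)$ up to constants, and integrating in $\lambda$ with the weight $|\lambda|^{2n}$ is Fourier inversion in $t$.

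\textbf{Main obstacle.} The main obstacle is bookkeeping: matching the signs of $\lambda$ in the twisted convolution and tracking all the $2\pi$ and $|\lambda|^n$ factors. There is also the analytic justification for exchanging sums and integrals. For $f\in\mathcal S$ the special Hermite expansion of $f^{-\lambda}$ converges rapidly, with decay in $k$ uniform over compact sets of $\lambda$, and $f^{\lambda}$ is Schwartz in $\lambda$; this makes Fubini legitimate for the inversion formula. For Plancherel I would first prove the identity on $\mathcal S$ and then extend it to $L^1\cap L^2$ by density. The extension works because $\widehat f(a,w)$ is defined pointwise for $f\in L^1$ and is continuous in $f$ in the $L^1$ norm, so the $L^2$-limit and the pointwise definition agree almost everywhere.
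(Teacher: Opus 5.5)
The paper gives no proof of this statement. It is recalled from Thangavelu \cite{Strichartz_Fourier_thangavelu} and used as a black box, so there is no in-paper argument to compare with. Your derivation is correct and self-contained, and it rests on the same mechanism as the reproducing identity $(2\pi)^{-n}|\lambda|^n\varphi^{n-1}_{k,\lambda}\ast_\lambda\widehat f(a,\cdot)=\widehat f(a,\cdot)$ that the paper quotes from \cite[(7.26)]{Strichartz_Fourier_thangavelu}.

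No sign adjustment is actually needed. Since $F\ast_\lambda G=G\ast_{-\lambda}F$ holds in general, the paper's conventions give exactly $\widehat f(a,\cdot)=f^{-\lambda}\ast_{-\lambda}\varphi^{n-1}_{k,\lambda}=\varphi^{n-1}_{k,\lambda}\ast_{\lambda}f^{-\lambda}$. The constants also close exactly:
\begin{itemize}
\item In Step 2 you get $(2\pi)^{-2n-1}(2\pi)^{2n}\cdot 2\pi=1$.
\item In Step 3 the $w$-integral is $e^{i\lambda t}\,\varphi^{n-1}_{k,\lambda}\ast_\lambda\varphi^{n-1}_{k,\lambda}\ast_\lambda f^{-\lambda}(z)$.
\item The sum over $k$ then gives $(2\pi)^{2n}|\lambda|^{-2n}e^{i\lambda t}f^{-\lambda}(z)$.
\item The $\nu$-integral reduces to $(2\pi)^{-1}\int_{\mathbb{R}}e^{i\lambda t}f^{-\lambda}(z)\,d\lambda=f(z,t)$.
\end{itemize}

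Two points should be tightened.

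\emph{Fubini near $\lambda=0$.} Decay in $k$ uniform on compact sets of $\lambda$ does not by itself justify Fubini over all of $\mathbb{R}$. Near $\lambda=0$ the special Hermite mass of $f^{-\lambda}$ spreads over $k\lesssim 1/|\lambda|$, so you need a global integrable majorant. One is available:
\begin{itemize}
\item Since $e_a$ is $U(n)$-invariant, the right-invariant sublaplacian acts on it with the same eigenvalue, so $\widehat{\mathcal{L}^N f}(a,\cdot)=((2k+n)|\lambda|)^N\widehat f(a,\cdot)$.
\item Combine this with Cauchy--Schwarz in $w$ and \eqref{twonormofLF}.
\end{itemize}
For fixed $\lambda$, the quantity $|\lambda|^{2n}\sum_k\int_{\mathbb{C}^n}|\widehat f(a,w)|\,|\varphi^{n-1}_{k,\lambda}(z-w)|\,dw$ is then bounded by a constant times
\[
|\lambda|^{n/2}\sum_{k}(k+1)^{\frac{n-1}{2}}\min\bigl\{1,((2k+n)|\lambda|)^{-N}\bigr\}\,\bigl(\|f^{-\lambda}\|_2+\|(\mathcal{L}^Nf)^{-\lambda}\|_2\bigr).
\]
For $N$ large this is $O(|\lambda|^{-1/2})$ near $\lambda=0$ and rapidly decreasing as $|\lambda|\to\infty$, hence integrable.

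\emph{Extension to $L^1\cap L^2$.} Your density step is valid but unnecessary. For such $f$, Fubini gives $\widehat f(a,\cdot)=\varphi^{n-1}_{k,\lambda}\ast_\lambda f^{-\lambda}$. Plancherel in $t$ shows $f^{-\lambda}\in L^2(\mathbb{C}^n)$ for a.e.\ $\lambda$. Your fibrewise computation in Step 2 therefore applies directly.
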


The normalized Strichartz Fourier transform of \(f\), denoted 
\(\mathcal{F}(f)=\tilde{f}\), is defined as 
\begin{equation}\label{norm_sft}
    (\mathcal{F}(f))(a,w)
    = \tilde{f}(a,w)
    := c_{n,k}\,\widehat{f}(a,w)
    = \frac{k!\,(n-1)!}{(k+n-1)!}\,\widehat{f}(a,w),
\end{equation}
for \(a=(\lambda,(2k+n)|\lambda|)\in R_{k}\).  
For the element \((0,\tau)\in R_{\infty}\), we set 
\begin{equation*}\label{norm_sft_infty}
    (\mathcal{F}(f))(0,\tau,w)
    = \tilde{f}(0,\tau,w)
    := \widehat{f}(0,\tau,w).
\end{equation*}

\noindent Using the estimates for Laguerre and Bessel functions (see \cite{szego}), Thangavelu in
\cite{Strichartz_Fourier_thangavelu} proved that the normalized Strichartz Fourier transform satisfies 
\begin{equation}\label{L1 to L inf bdd}
  \sup_{(a,w)\in \Omega \times \mathbb{C}^{n}}
  \big|(\mathcal{F}(f))(a,w)\big|
  \leq C_n \|f\|_{1},
\end{equation}
for some constant \(C_n\).

\noindent The inversion formula can be stated in terms of the normalized Strichartz Fourier transform as
\begin{equation}\label{inversionformula}
   f(z,t)=\int_{\Omega \times \mathbb{C}^n}c_{n,k}\,(\mathcal{F}({f}))(a,w) \; e_a \left((-w,0)(z,t)\right) dw \, d\nu_2(a), 
\end{equation}
and the Plancherel identity takes the form
\begin{equation}\label{plancherel}
    \int_{\mathbb{H}^{n}} |f(z,t)|^{2}\,dz\,dt
    = \int_{\Omega \times \mathbb{C}^{n}}
      \big|(\mathcal{F}(f))(a,w)\big|^{2}\,dw\, d\nu_{2}(a),
\end{equation}
where \(d\nu_{2}(a)\) is a measure on \(\Omega\) given by  
\begin{equation}\label{measure nu_2}
\int_{\Omega} \phi(a)\, d\nu_{2}(a)
:= (2\pi)^{-2n-1}
   \int_{-\infty}^{\infty}
      \left(
           \sum_{k=0}^{\infty}
           \left(\frac{(k+n-1)!}{k!\,(n-1)!}\right)^{2}
           \phi(\lambda,(2k+n)|\lambda|)
      \right)
      |\lambda|^{2n}\,d\lambda.
\end{equation}

\noindent Throughout this paper, we use the notation \(\mathcal{F}(f)\) to denote the normalized 
Strichartz Fourier transform.

Unlike the classical Fourier transform on \(\mathbb{R}^{n}\) which is a unitary operator on 
\(L^{2}(\mathbb{R}^{n})\), the Strichartz Fourier transform does not define a unitary operator from 
\(L^{2}(\mathbb{H}^{n})\) onto 
\(L^{2}(\Omega \times \mathbb{C}^{n}, d\nu\,dw)\). It follows from \cite[(7.26)]{Strichartz_Fourier_thangavelu} that for any $f \in L^2(\mathbb{H}^n),$ the function $\widehat{f}(a,w)$ belongs to $L^2(\Omega \times \mathbb{C}^n, d\nu \, dw)$ and satisfies
\begin{equation*}
    (2\pi)^{-n}|\lambda|^{n}
    \left(
        \varphi^{\,n-1}_{k,\lambda} 
        \ast_{\lambda} 
        \widehat{f}(a,\cdot)
    \right)(z)
    = \widehat{f}(a,z),
    \qquad a \in R_{k}.
\end{equation*}
As a consequence, we restrict to the subspace $L^2_0(\Omega \times \mathbb{C}^n, d\nu \, dw)$ of $L^2(\Omega \times \mathbb{C}^n, d\nu \, dw)$ defined by
\begin{equation}
\left\{ F \in L^2(\Omega \times \mathbb{C}^n, d\nu \, dw) : (2\pi)^{-n}|\lambda|^{n}
    \left(
        \varphi^{\,n-1}_{k,\lambda} 
        \ast_{\lambda} 
        F(a,\cdot)
    \right)(z)
    = F(a,z)\, \text{for all} \; a \in R_{k} \right\},
\end{equation}
where  the twisted convolution $\ast_\lambda$ of two suitable functions $f_1$ and $f_2$ on $\mathbb{C}^n$ is defined as $$(f_1 \ast_\lambda f_2)(z) := \int_{\mathbb{C}^n} f_1(z - w) f_2(w) \, e^{\frac{i}{2} \lambda \operatorname{Im}(z\overline{w})} \, dw.$$
Considering the subspace $L^2_0(\Omega \times \mathbb{C}^n, d\nu \, dw)$, Thangavelu proved the following result in \cite{Strichartz_Fourier_thangavelu}.
\begin{theorem}
    The Strichartz Fourier transform is a unitary operator from \(L^2(\mathbb{H}^n)\) onto \(L_0^2(\Omega \times \mathbb{C}^n, d\nu\;dw)\).
\end{theorem}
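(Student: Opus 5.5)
The approach is to reduce everything to the Euclidean Fourier transform in the central variable, followed by the Weyl/twisted-convolution picture on \(\mathbb{C}^n\) for each fixed \(\lambda\). For \(f\in\mathcal{S}(\mathbb{H}^n)\) and \(a=(\lambda,(2k+n)|\lambda|)\in R_k\), I would first unwind the definition \eqref{sft def1}. Using \((z,t)^{-1}(w,0)=(w-z,-t+\tfrac12\operatorname{Im}(-z\cdot\bar w))\), one gets
\[
\widehat{f}(a,w)=\int_{\mathbb{C}^n} f^{-\lambda}(z)\,\varphi^{n-1}_{k,\lambda}(w-z)\,e^{\pm\frac{i}{2}\lambda\operatorname{Im}(w\cdot\bar z)}\,dz
=\big(f^{\mp\lambda}\ast_{\lambda}\varphi^{n-1}_{k,\lambda}\big)(w),
\]
where the signs are fixed by the conventions. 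The key step is then to invoke the Laguerre expansion of functions on \(\mathbb{C}^n\), i.e. the spectral decomposition of the special Hermite operator. For \(g\in L^2(\mathbb{C}^n)\) it reads
\[
g=(2\pi)^{-n}|\lambda|^n\sum_{k} g\ast_\lambda\varphi^{n-1}_{k,\lambda},
\]
and the projections \(P_k^\lambda g=(2\pi)^{-n}|\lambda|^n g\ast_\lambda\varphi_{k,\lambda}^{n-1}\) are mutually orthogonal. Consequently
\[
\sum_k\|P_k^\lambda g\|_2^2=\|g\|_2^2 .
\]
The needed facts are \(\varphi_{k}\ast_\lambda\varphi_j=(2\pi)^n|\lambda|^{-n}\delta_{jk}\varphi_k\) and the completeness of the \(\varphi_k\) in the twisted convolution algebra, both standard from \cite{thangaveluheisenberg}.

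Isometry onto its image then follows directly. The weight \(|\lambda|^{2n}\) together with \((2\pi)^{-2n-1}\) in \eqref{nu_def} exactly absorbs the factor \(((2\pi)^{-n}|\lambda|^n)^2\) relating \(\widehat f(a,\cdot)\) to \(P_k^\lambda f^{\mp\lambda}\). Summing over \(k\) gives \(\|f^{\lambda}\|_{L^2(\mathbb{C}^n)}^2\), and Euclidean Plancherel in \(t\), \(\int\|f^\lambda\|^2d\lambda=2\pi\|f\|_2^2\), reproduces the Plancherel identity stated above. This holds for Schwartz functions, and by density the map extends to an isometry on \(L^2(\mathbb{H}^n)\). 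The ray \(R_\infty\) has \(\nu\)-measure zero, so it plays no role. The range lies in \(L^2_0\) because \(P_k^\lambda\) is idempotent, which is exactly the reproducing identity \((2\pi)^{-n}|\lambda|^n\varphi_{k,\lambda}\ast_\lambda\widehat f(a,\cdot)=\widehat f(a,\cdot)\). Some care is needed with the order of twisted convolution, but \(\varphi_{k,\lambda}\) is radial, so left and right projections coincide.

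The main obstacle is surjectivity. Take \(F\in L^2_0\) and set \(g_\lambda:=\sum_k F((\lambda,(2k+n)|\lambda|),\cdot)\), with the appropriate constants. By the constraint defining \(L^2_0\) and the orthogonality of the projections, this series converges in \(L^2(\mathbb{C}^n)\) for a.e. \(\lambda\) and satisfies \(P_k^\lambda g_\lambda=F(a,\cdot)\). I would also check that \(\int\|g_\lambda\|^2d\lambda\) equals \(\|F\|^2\) up to the normalizing constant. I then define \(f\) by the inverse Euclidean Fourier transform in \(\lambda\) of \(g_{\mp\lambda}\), which lies in \(L^2(\mathbb{H}^n)\) by Plancherel. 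Verifying \(\widehat f=F\) reduces, for a.e. \(\lambda\), to \(P^\lambda_k g_\lambda=F(a,\cdot)\), and this needs only joint measurability in \((\lambda,z)\) together with the fact that the \(P_k^\lambda\) sum to the identity. An alternative is to show that the orthogonal complement of the range inside \(L^2_0\) is trivial, by pairing with \(\widehat f\) for \(f=\phi(t)\psi(z)\) and using that such \(f\) are dense.
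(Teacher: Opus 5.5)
The paper does not prove this statement. It is quoted from Thangavelu \cite{Strichartz_Fourier_thangavelu}, and the only ingredient recalled is the reproducing identity (7.26), which motivates $L^2_0$. Your argument therefore supplies a proof the paper omits, and its structure is sound. You take the Fourier transform in $t$, identify $\widehat f(a,\cdot)$ with a twisted-convolution spectral projection of $f^{-\lambda}$, and use the special Hermite (Laguerre) expansion for the isometry. For surjectivity you use the explicit preimage $f^{-\lambda}=(2\pi)^{-n}|\lambda|^{n}\sum_k F((\lambda,(2k+n)|\lambda|),\cdot)$. Compared with the bare citation, this shows why the weight $(2\pi)^{-2n-1}|\lambda|^{2n}$ in $\nu$ is the right one. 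It also shows why the range is cut out exactly by the reproducing identity: that identity is idempotence of the projection.

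One justification has to be corrected. Left and right twisted convolution by the radial function $\varphi^{n-1}_{k,\lambda}$ do \emph{not} coincide. Set $W_\lambda(g)=\int_{\mathbb{C}^n}g(z)\pi_\lambda(z,0)\,dz$. Then $W_\lambda(g_1\ast_\lambda g_2)=W_\lambda(g_1)W_\lambda(g_2)$, and $W_\lambda(\varphi^{n-1}_{k,\lambda})$ is a positive multiple of the projection $E_k$ onto the span of the scaled Hermite functions of degree $k$. Hence $\varphi^{n-1}_{k,\lambda}\ast_\lambda g$ corresponds to $E_kW_\lambda(g)$ and $g\ast_\lambda\varphi^{n-1}_{k,\lambda}$ corresponds to $W_\lambda(g)E_k$. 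These differ unless $W_\lambda(g)$ commutes with $E_k$. What is true for all $g,h$ is $g\ast_\lambda h=h\ast_{-\lambda}g$. Computing carefully with the paper's conventions gives
\[
\widehat f(a,\cdot)=\varphi^{n-1}_{k,\lambda}\ast_\lambda f^{-\lambda}=f^{-\lambda}\ast_{-\lambda}\varphi^{n-1}_{k,\lambda},
\]
so the right-convolution form you display needs $\ast_{-\lambda}$, not $\ast_\lambda$.

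This is not cosmetic: a right projection $W_\lambda(g)E_k$ would in general violate the left reproducing identity that defines $L^2_0$. Fortunately, the correct formula is exactly the left projection $P_k^\lambda g=(2\pi)^{-n}|\lambda|^{n}\varphi^{n-1}_{k,\lambda}\ast_\lambda g$. These operators are self-adjoint, satisfy $P_j^\lambda P_k^\lambda=\delta_{jk}P_k^\lambda$, and sum to the identity. With $P_k^\lambda$ read this way, every step of your argument goes through.

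Two smaller points. First, the surjectivity step uses orthogonality and idempotence, not completeness; completeness is what gives the isometry. Second, to conclude $\widehat f=F$ you should note that the $L^2$-extension of $f\mapsto\widehat f$ is still given fibrewise by $\varphi^{n-1}_{k,\lambda}\ast_\lambda f^{-\lambda}$. This holds because both maps are continuous on $L^2(\mathbb{H}^n)$ and agree on $\mathcal{S}(\mathbb{H}^n)$.
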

One of the key ingredients in establishing boundedness of Fourier multipliers are the interpolation theorems. To make the paper self-contained we recall here mixed norm spaces and the Lorentz spaces and their associated interpolation theorems.
\subsection{Mixed norm spaces}
In this paper, we follow the terminology in \cite{Tapendu-multipliers}. The reader can refer to \cite{Benedek-mixed_norms,Huang-Survey_mixed_norms} for detailed study.

Let \((X_i,\mu_i)\) for \(i=1,2\) be two \(\sigma\)- finite measure spaces and \((X,\mu)\) be their product measure space. For an ordered pair \(p=(p_1,p_2)\), the mixed norm of a measurable function \(f\) on \((X,\mu)\) is defined as 
\begin{equation*}
    \lVert f \rVert_{(p_1,p_2)}=\left ( \int_{X_1} \left( \int_{X_2} |f(x_1,x_2)|^{p_1}\,dx_2\right)^{p_2/p_1}\,dx_1 \right)^{1/p_2}.
\end{equation*}
We denote by \(L^{(p_1,p_2)}(X_1 \times X_2)\) the space of all functions \(f\) satisfying
\[
\lVert f \rVert_{(p_1,p_2)} < \infty.
\]

\noindent Similar to the classical \(L^p\) spaces, mixed norm spaces also admit an interpolation theory. The following theorem is well known and can be found in \cite{Benedek-mixed_norms, Huang-Survey_mixed_norms}.
\begin{thm}
    Let \((X,\mu)\) and \((Y,\nu)\) be two mixed norm spaces. Let \(T\) be a linear map from \(X\) to \(Y\) such that 
    \begin{equation*}
        \lVert Tx \rVert_{(p^i_1,p^i_2)}\leq M_i \lVert x \rVert_{(q^i_1,q^i_2)}, \text{ for } i=1,2.
    \end{equation*}
    Then \(T\) also satisfies the inequality
    \begin{equation*}
        \lVert Tx \rVert_{(p^\theta_1,p^\theta_2)}\leq M_1^{1-\theta}M_2^\theta\lVert x \rVert_{(q^\theta_1,q^\theta_2)},
    \end{equation*}
    where \(0<\theta<1\) and 
    \begin{equation*}
        \frac{1}{p^\theta_j}=\frac{1-\theta}{p^1_j}+\frac{\theta}{p_j^2}, \text{ and } \frac{1}{q^\theta_j}=\frac{1-\theta}{q^1_j}+\frac{\theta}{q_j^2}.
    \end{equation*}
\end{thm}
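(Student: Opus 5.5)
The statement is the Riesz--Thorin theorem for mixed-norm spaces, and I would prove it the classical way, by complex interpolation and Hadamard's three lines lemma. Everything is adapted to the iterated structure of the norm $\lVert\cdot\rVert_{(p_1,p_2)}$. Throughout I assume all exponents lie in $[1,\infty]$, as in the standard statement in \cite{Benedek-mixed_norms}. By density it suffices to prove the inequality for simple functions $x$ of the form $\sum_j c_j \chi_{A_j\times B_j}$, with $A_j\subset X_1$, $B_j\subset X_2$ of finite measure.

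For such $x$ the map $T$, being linear, gives $Tx=\sum_j c_j T(\chi_{A_j\times B_j})$. I would also use the duality formula for mixed norms,
\[
\lVert g\rVert_{(p_1,p_2)}=\sup\Big\{\Big|\int g\,h\,d\mu\Big| : h \text{ simple},\ \lVert h\rVert_{(p_1',p_2')}\le 1\Big\}.
\]
This follows from applying ordinary $L^p$ duality twice: first in the inner variable, and then in the outer variable to the function of $x_1$ given by the inner norm. Here $\sigma$-finiteness is used.

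Fix a simple $x$ with $\lVert x\rVert_{(q_1^\theta,q_2^\theta)}=1$ and a simple $h$ with $\lVert h\rVert_{((p_1^\theta)',(p_2^\theta)')}=1$. For $z$ in the strip $0\le \operatorname{Re} z\le 1$, set
\[
\frac{1}{q_j(z)}=\frac{1-z}{q_j^1}+\frac{z}{q_j^2},
\]
and similarly $p_j(z)$. Write $x=|x|e^{i\phi}$ and $F(x_1)=\lVert x(x_1,\cdot)\rVert_{q_1^\theta}$. Define
\[
x_z(x_1,x_2)=F(x_1)^{\,q_2^\theta/q_2(z)-q_1^\theta/q_1(z)}\,|x(x_1,x_2)|^{\,q_1^\theta/q_1(z)}\,e^{i\phi(x_1,x_2)},
\]
so that $x_\theta=x$. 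Define $h_z$ analogously with the dual exponents $(p_j(z))'$.

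The key computation is the following. On $\operatorname{Re} z=0$ one has $\lVert x_z\rVert_{(q_1^1,q_2^1)}=1$. Indeed, the inner $L^{q_1^1}$ norm in $x_2$ equals $F(x_1)^{q_2^\theta/q_2^1}$, because the $F$-power cancels the $F^{q_1^\theta/q_1^1}$ produced by the inner integral. The outer $L^{q_2^1}$ norm is then $\lVert F\rVert_{q_2^\theta}^{q_2^\theta/q_2^1}=1$. The same holds for $\operatorname{Re} z=1$ with exponents $(q_1^2,q_2^2)$, and likewise for $h_z$.

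Then $\Phi(z)=\int Tx_z\,h_z\,d\mu$ is a finite sum of terms of the form (constant)$\cdot e^{\alpha z}$ times integrals independent of $z$. Hence $\Phi$ is entire and bounded on the strip. By the hypotheses and Hölder's inequality for mixed norms, $|\Phi(iy)|\le M_1$ and $|\Phi(1+iy)|\le M_2$. The three lines lemma gives $|\Phi(\theta)|\le M_1^{1-\theta}M_2^\theta$. Since $\Phi(\theta)=\int Tx\,h\,d\mu$, taking the supremum over $h$ yields the claim.

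I expect the main obstacle to be the bookkeeping of the boundary norms in the mixed setting. The second step above needs the exponent of $F$ chosen precisely so that the inner and outer integrations decouple. One must also handle degenerate cases: some $q_j^i=\infty$, where the formula is read with $1/\infty=0$ and the inner norm becomes an essential supremum; and points where $F(x_1)=0$, where one sets $x_z=0$. In those cases I would first treat finite exponents and then recover infinite ones by a limiting argument, or by interpolating in the variables separately.
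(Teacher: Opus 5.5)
The paper gives no proof of this statement; it only cites \cite{Benedek-mixed_norms, Huang-Survey_mixed_norms}. Your proposal is the classical complex-interpolation (three lines) proof from that literature, and it is correct. The correction factor $F(x_1)^{q_2^\theta/q_2(z)-q_1^\theta/q_1(z)}$ makes both boundary mixed norms equal to $1$. Restricting to simple functions built on rectangles, refined to disjoint rectangles from product partitions, is what makes $F$ simple, so that $\Phi$ is a finite sum of bounded exponentials.

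One adjustment concerns the degenerate exponents. A limiting argument in the exponents is neither needed nor available, since the hypotheses hold only at the given exponents. The constructions of $x_z$ and $h_z$ should instead be handled directly, in three cases.
\begin{itemize}
\item If an exponent takes the same infinite value at both endpoints (for instance $q_1^1=q_1^2=\infty$, or $p_j^1=p_j^2=1$ for $h_z$), replace the corresponding undefined ratio by the constant $1$.
\item Set $x_z=0$ where $x=0$ (and similarly for $h_z$), so that no expression $0^{iy}$ appears.
\item If some $q_j^\theta=\infty$, rectangle-simple functions are not dense in $L^{(q_1^\theta,q_2^\theta)}$, so your density step fails. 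The estimate should then be read for simple $x$, as in Benedek--Panzone, or extended by a separate argument.
\end{itemize}
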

\noindent Note that the order of integration taken in mixed norm spaces is different from that considered in \cite{Benedek-mixed_norms}, as we follow the terminology as in \cite{Tapendu-multipliers}.

\subsection{Lorentz spaces}
Next we recall the Lorentz spaces and the interpolation theorem associated to these spaces. Let \((X,\mu)\) be a measure space and \(f\) be a measurable function on \(X\). The distribution function of \(f\), denoted by \(d_f\) is defined on \([0,\infty)\) as 
$$
d_{f}(M)=\mu \left \{x: |f(x)|>M\right \}.
$$
\noindent The decreasing rearrangement of \(f\), denoted by \(f^\ast\), is a decreasing function on \([0,\infty)\), with the property that it shares the same distribution function as \(f\), that is,
 $$d_{f}=d_{f^*}.$$ 
Explicitly, for \(t \geq 0\), it is defined as
$$
f^*(t)=\inf \left \{s>0:d_{f}(s)\leq t\right \}.
$$
For details and further properties of decreasing rearrangement, we refer the reader to \cite{grafakos_book}.
\begin{defn}
    Let \(f\) be a measurable function on \((X,\mu)\), \(f^\ast\) denote its decreasing rearrangement \cite{grafakos_book} and \(0 <p,q \leq \infty\), and define
    $$
    \lVert f \rVert_{L^{p,q}}=\begin{cases}
    \left (\displaystyle\int\limits_{0}^{\infty}\left( t^{1/p}f^\ast(t)\right)^q\frac{dt}{t} \right)^{1/q}\; &\textrm{if}\; q< \infty \\
    \sup\limits_{t>0}t^{1/p}f^\ast(t)\; &\textrm{if}\; q=\infty.
\end{cases} 
    $$
    The set of all functions \(f\) for which \({\lVert f \rVert}_{L^{p,q}}< \infty\), denoted by \(L^{p,q}(X)\), is called the Lorentz space with indices \(p\) and \(q\).
\end{defn}
\noindent We now recall the Marcinkiewicz interpolation theorem for Lorentz spaces \cite[Theorem 4.13, Corollary 4.14]{bennett's_book}.

\begin{theorem}\label{Marcinkiewicz}
    Let \((X,\mu)\) and \((Y,\nu)\) be two measure spaces and let \(1 \leq p_1 < p_2 < \infty\) and \(1 \leq q_1,q_2 \leq \infty, \; q_1 \neq q_2,\; p_i\leq q_i,\; i=1,2\). Suppose T is a quasilinear operator on \(L^{p_1,1}(X,\mu)+L^{p_2,1}(X,\mu)\) taking values in the space of measurable functions on \((Y,\nu)\), such that \(T\) is simultaneously weak type \((p_1,q_1)\) and \((p_2,q_2)\), then for any \(0<t<1\), and 
    $$
    \frac{1}{p}=\frac{1-t}{p_1}+\frac{t}{p_2},\; \frac{1}{q}=\frac{1-t}{q_1}+\frac{t}{q_2},
    $$
    the operator \(T\) is bounded from \(L^p\) to \(L^q\).
\end{theorem}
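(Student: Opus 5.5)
The plan is the classical Calderón-operator proof of the Marcinkiewicz theorem, as in \cite[Ch.~4]{bennett's_book}. The hypotheses are read in the restricted sense: \(T\) is of weak type \((p_i,q_i)\) if
\[
\|Tf\|_{L^{q_i,\infty}}\le M_i\|f\|_{L^{p_i,1}}, \qquad i=1,2 .
\]
The strategy has three steps:
\begin{enumerate}
\item reduce the two endpoint estimates to one pointwise inequality for decreasing rearrangements;
\item apply weighted Hardy inequalities to get boundedness \(L^{p,r}\to L^{q,r}\) for every \(1\le r\le\infty\);
\item specialise \(r=p\) and use \(p\le q\).
\end{enumerate}

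\textbf{Step 1: the Calderón operator.} Consider the segment joining \((1/p_1,1/q_1)\) and \((1/p_2,1/q_2)\). Let \(m\) denote its slope, so that
\[
m=\frac{1/q_1-1/q_2}{1/p_1-1/p_2},
\]
and \(m\neq 0\) because \(q_1\ne q_2\). For \(\phi\ge0\) decreasing, define
\[
S\phi(t)=t^{-1/q_1}\int_0^{t^{m}}s^{1/p_1}\phi(s)\,\frac{ds}{s}+t^{-1/q_2}\int_{t^{m}}^{\infty}s^{1/p_2}\phi(s)\,\frac{ds}{s}.
\]
If \(m<0\), the limits of integration are interchanged. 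The key claim is
\[
(Tf)^*(t)\le C\,S(f^*)(t), \qquad t>0 .
\]

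To prove the claim, first take \(f\) simple. Split \(f=f_1+f_2\) at the level \(f^*(t^m)\), with
\[
f_1=(|f|-f^*(t^m))_+\,\mathrm{sgn} f, \qquad f_2=f-f_1 .
\]
The rearrangement of \(f_1\) lives on \([0,t^m)\), and \(f_2\) is bounded by \(f^*(t^m)\). Quasilinearity gives
\[
(Tf)^*(t)\lesssim (Tf_1)^*(t/2)+(Tf_2)^*(t/2).
\]
Each term is then controlled by a weak-type bound: \((Tf_j)^*(t/2)\lesssim t^{-1/q_j}\|f_j\|_{L^{p_j,1}}\). The \(L^{p_j,1}\) norms of the truncations reduce to the two integrals in \(S f^*\), up to the boundary term \(t^{m/p_j}f^*(t^m)\). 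That boundary term is absorbed because \(f^*\) is decreasing. General \(f\) in \(L^{p_1,1}+L^{p_2,1}\) is then handled by approximation.

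\textbf{Step 2: Hardy inequalities.} Since \(1/q=(1-t)/q_1+t/q_2\), we have \(1/q_1>1/q>1/q_2\) (in the case \(q_1<q_2\)). Compute \(\|Tf\|_{L^{q,r}}\le C\|S f^*\|_{L^{q,r}}\). Change variables \(u=t^m\) and apply the two Hardy inequalities, one for the averaging operator \(\int_0^u\) and one for the tail operator \(\int_u^\infty\). The admissibility conditions for these inequalities are exactly the strict inequalities \(1/p_2<1/p<1/p_1\) and the corresponding ones for \(q\). This yields
\[
\|Tf\|_{L^{q,r}}\lesssim \|f\|_{L^{p,r}}, \qquad 1\le r\le\infty .
\]

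\textbf{Step 3: conclusion.} Take \(r=p\), so \(L^{p,p}=L^p\). Because \(p\le q\) (inherited from \(p_i\le q_i\) by convexity), the embedding \(L^{q,p}\hookrightarrow L^{q,q}=L^q\) gives \(\|Tf\|_{L^q}\lesssim\|f\|_{L^p}\).

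\textbf{Main obstacle.} I expect the pointwise rearrangement inequality in Step 1 to be the hardest part. It needs care with quasilinearity constants and with the sign of \(m\), which is why the proof is split into the cases \(q_1<q_2\) and \(q_1>q_2\). It also needs care with the restricted-weak-type endpoints, where only \(L^{p_i,1}\) control is available; the truncation must be chosen so that the \(L^{p_i,1}\) norms of \(f_1\) and \(f_2\) are computed exactly from \(f^*\).
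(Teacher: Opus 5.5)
Your proposal is correct and is essentially the argument behind the result the paper cites without proof (Bennett--Sharpley, Theorem 4.13 and Corollary 4.14): the pointwise bound $(Tf)^*\le C\,S(f^*)$ via the Calder\'on operator, then Hardy's inequalities giving $L^{p,r}\to L^{q,r}$, then $r=p$ together with $L^{q,p}\hookrightarrow L^{q}$ because $p\le q$. One correction: do not interchange the limits of integration when $m<0$, and no case split between $q_1<q_2$ and $q_1>q_2$ is needed, since your truncation at height $f^*(t^m)$ and the absorption of the boundary term only use $t^{-1/q_1+m/p_1}=t^{-1/q_2+m/p_2}$, which holds for either sign of $m$; interchanging would attach the tail integral $\int_{t^m}^{\infty}s^{1/p_1}f^*(s)\,ds/s$ to the smaller exponent $p_1$, and that integral in general diverges for $f\in L^p$, which ruins Step 2.
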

\noindent Note that in the above theorem, by weak type \((p_1,q_1)\), we mean that the the operator \(T\) is bounded from \(L^{p_1,1} \text{ to } L^{q_1,\infty}\).

\noindent Throughout the paper we will use the notation 
\(a=(\lambda,(2k+n)|\lambda|)\), where \(\lambda\in \mathbb{R}^\ast=\mathbb{R}\setminus\{0\}\) and 
\(k\in \mathbb{N}\cup\{0\}\). Moreover, \(L^{(p,q)}\) will denote the mixed norm 
space, whereas \(L^{p,q}\) denotes the Lorentz space. We adopt the standard convention of using \(C_1,C_2\) and \(C_p\) to denote positive constants whose values may vary from one equation to the other. 

 We next discuss the boundedness of the Fourier multiplier operator. For a bounded function \(m\) on \(\Omega\), consider the operator \(T_m\) on \(\mathcal{S}(\mathbb{H}^n)\) defined as
\begin{equation}\label{operator T_m}
    \mathcal{F}\left({T_m(f)}\right)(a,w)=m(a)\mathcal{F}(f)(a,w).
\end{equation}
\begin{remark}
    Observe that we are considering \(m\) as a function of \(a \in \Omega\) and not a function of \((a,w)\in \Omega \times \mathbb{C}^n\) because of the complexity in considering the inverse image of the function \(m(a,w)\mathcal{F}(a,w)\) under the Strichartz Fourier transform.
\end{remark}
Consider left translations on \(\mathbb{H}^n\) defined as 
\begin{equation*}
    \tau_{(u,s)}f(z,t)=f\left((u,s)^{-1}(z,t) \right).
\end{equation*}
Recall that Fourier multiplier operators on \(\mathbb{R}^n\) commutes with translations. We prove a similar result in the setting of the Heisenberg group.
\begin{lemma}\label{commutativity of translation and T_m}
Let \((u,s) \in \mathbb{H}^n\), and let \(T_m\) be the Fourier multiplier operator defined above. Then, for every \(f \in \mathcal{S}(\mathbb{H}^n)\), we have
\begin{equation*}
  \tau_{(u,s)}\left(T_m(f)\right)(z,t)=T_m\left(\tau_{(u,s)} f\right)(z,t).  
\end{equation*}
\end{lemma}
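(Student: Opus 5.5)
Since the Strichartz Fourier transform is injective on $L^2(\mathbb{H}^n)$ (it is unitary onto $L^2_0$), it suffices to show that both sides have the same normalized Strichartz Fourier transform. Everything then reduces to one fact: how $\mathcal{F}$ acts on a left translate.

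\textbf{Step 1: effect of left translation on the transform.} Fix $a\in R_k$ and $(w,0)$. From \eqref{sft def1},
\[
\widehat{\tau_{(u,s)}f}(a,w)=\int_{\mathbb{H}^n} f\bigl((u,s)^{-1}(z,t)\bigr)\,e_a\bigl((z,t)^{-1}(w,0)\bigr)\,dz\,dt .
\]
Substitute $(z,t)=(u,s)(z',t')$, which preserves Haar measure. The kernel becomes $e_a\bigl((z',t')^{-1}(u,s)^{-1}(w,0)\bigr)$. Next compute the group element
\[
(u,s)^{-1}(w,0)=(w-u,\,-s+\tfrac12\operatorname{Im}(-u\cdot\overline w))=:(w-u,\,\sigma),
\]
and write $(w-u,\sigma)=(w-u,0)(0,\sigma)$, using that $(0,\sigma)$ is central. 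Since $e_a(z,t)=e^{i\lambda t}\varphi^{n-1}_{k,\lambda}(z)$, the central factor pulls out:
\[
e_a\bigl(g\,(w-u,0)(0,\sigma)\bigr)=e^{i\lambda\sigma}e_a\bigl(g\,(w-u,0)\bigr).
\]
Hence
\[
\widehat{\tau_{(u,s)}f}(a,w)=e^{i\lambda\sigma(u,s,w)}\,\widehat f(a,w-u),
\]
where the phase depends only on $\lambda$ (hence on $a$), $u$, $s$ and $w$. For $a=(0,\tau)\in R_\infty$ the Bessel kernel depends only on $|w-z|$, so the translation gives $\widehat f(0,\tau,w-u)$ directly, with trivial phase. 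Multiplying by $c_{n,k}$ gives the same identity for $\mathcal{F}$.

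\textbf{Step 2: compare both sides.} Write $\mathcal{F}(\tau_{(u,s)}g)(a,w)=\Phi_{u,s}(a,w)\,\mathcal{F}(g)(a,w-u)$, with $\Phi_{u,s}$ the unimodular phase from Step 1. Then
\[
\mathcal{F}\bigl(\tau_{(u,s)}T_mf\bigr)(a,w)=\Phi_{u,s}(a,w)\,m(a)\,\mathcal{F}f(a,w-u),
\]
while
\[
\mathcal{F}\bigl(T_m\tau_{(u,s)}f\bigr)(a,w)=m(a)\,\Phi_{u,s}(a,w)\,\mathcal{F}f(a,w-u).
\]
These agree. The key point is that $m$ depends only on $a$ and not on $w$, so it commutes with the shift in $w$ and with the phase. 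By injectivity of $\mathcal{F}$ (Plancherel \eqref{plancherel}), the two functions coincide in $L^2$. Both are continuous for Schwartz $f$, which can be read off from the inversion formula \eqref{inversionformula}, so they agree pointwise.

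\textbf{Expected obstacle.} The main difficulty is the bookkeeping in Step 1. One must verify that the inner group element splits as a $\mathbb{C}^n$-translate times a central element, and that $e_a$ of a product with a central element factors out exactly as $e^{i\lambda\sigma}$. There is also a technical point: $T_mf$ for $f\in\mathcal{S}$ is defined only through $\mathcal{F}^{-1}$ and lies in $L^2$ rather than $L^1$. To apply Step 1 to $g=T_mf$, first prove the translation formula for $g\in L^1\cap L^2$. Then extend it to all of $L^2$ by density and the unitarity of $\mathcal{F}$ onto $L^2_0$; both sides of the formula are isometries in $g$, so the extension is immediate.
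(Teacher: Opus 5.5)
Your proposal is correct and follows essentially the same route as the paper. Both compute that the transform of a left translate is a unimodular phase depending on $\lambda,u,s,w$ times the shift $w\mapsto w-u$, and both then note that multiplication by $m(a)$, which is independent of $w$, commutes with the phase and the shift. Your additional care goes beyond the paper's proof without changing the argument: you extend the translation formula by density to $T_mf\in L^2$, you treat the $R_\infty$ ray, and you keep the argument $(a,w-u)$ explicit throughout, where the paper's displayed chain writes $(a,w)$ by a typo.
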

\begin{proof}
We will show that 
$$
   \mathcal{F}\left(\tau_{(u,s)}T_m(f) \right)(a,w)=\mathcal{F}\left(T_m\tau_{(u,s)}(f) \right)(a,w).
$$
Observe that for any \(g \in \mathcal{S}(\mathbb{H}^n)\),
\begin{align*}
    \mathcal{F}\left(\tau_{(u,s)}g \right)(a,w)&=c_{n,k}\int_{\mathbb{H}^n}\tau_{(u,s)}g(z,t) e_a\left((z,t)^{-1}(w,0) \right)\,dz\,dt\\
&=c_{n,k}\int_{\mathbb{H}^n}g\left((u,s)^{-1}(z,t) \right)e_a\left((z,t)^{-1}(w,0) \right)\,dz\,dt\\
    &=c_{n,k}\int_{\mathbb{H}^n}g(z,t) e_a\left((z,t)^{-1}(u,s)^{-1}(w,0) \right)\,dz\,dt\\
    &=e^{-i\lambda\left(s+\frac{1}{2}\operatorname{Im}(u \bar{w})\right)}c_{n,k}\int_{\mathbb{H}^n}g(z,t) e_a\left((z,t)^{-1}(-u+w,0) \right)\,dz\,dt\\
    &=e^{-i\lambda\left(s+\frac{1}{2}\operatorname{Im}(u \bar{w})\right)}\mathcal{F}(g)(a,-u+w).
\end{align*}
Using the above computation, we see that 
\begin{align}
    \mathcal{F}\left(\tau_{(u,s)}T_m(f) \right)(a,w)&=e^{-i\lambda\left(s+\frac{1}{2}\operatorname{Im}(u \bar{w})\right)}\mathcal{F}\left(T_m(f)\right)(a,w)\nonumber\\
    &=e^{-i\lambda\left(s+\frac{1}{2}\operatorname{Im}(u \bar{w})\right)}m(a)\mathcal{F}(f)(a,w)\nonumber\\
    &=m(a)\mathcal{F}\left(\tau_{(u,s)}(f)\right)(a,w)\nonumber \\
    &=\mathcal{F}\left(T_m \tau_{(u,s)}(f)\right)(a,w).
\end{align}
This completes the proof.
\end{proof}
\noindent An immediate consequence of this theorem is that the boundedness of 
\(T_m\) from \(L^p(\mathbb{H}^n) \) to \(L^q(\mathbb{H}^n)\)
 forces the relation \(p\leq q\).
\begin{theorem}
    Let \(T_m\) be bounded from \(L^p(\mathbb{H}^n)\) to \(L^q(\mathbb{H}^n)\). Then we must have \(p \leq q\).
\end{theorem}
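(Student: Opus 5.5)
This is Hörmander's classical argument: a bounded operator that commutes with translations on a noncompact group can only be bounded from \(L^p\) to \(L^q\) when \(p\le q\). We take Lemma \ref{commutativity of translation and T_m} as the structural input and assume \(T_m\neq 0\); otherwise there is nothing to prove.

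The key analytic fact is the following. For \(g\in L^r(\mathbb{H}^n)\) with \(1\le r<\infty\), and \(h_j=(u_j,s_j)\to\infty\) in \(\mathbb{H}^n\),
\begin{equation*}
\lim_{j\to\infty}\lVert g+\tau_{h_j}g\rVert_r = 2^{1/r}\lVert g\rVert_r .
\end{equation*}
We prove it first for \(g\in C_c(\mathbb{H}^n)\). Choose the sequence \(h_j\), for instance \(h_j=(0,j)\), so that the supports of \(g\) and \(\tau_{h_j}g\) are disjoint for large \(j\). Translations are left translations and preserve Haar measure \(dz\,dt\), so for large \(j\)
\[
\lVert g+\tau_{h_j}g\rVert_r^r=\lVert g\rVert_r^r+\lVert \tau_{h_j}g\rVert_r^r=2\lVert g\rVert_r^r .
\]
For general \(g\in L^r\) we approximate by \(C_c\) functions. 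This uses the isometry \(\lVert\tau_h g\rVert_r=\lVert g\rVert_r\), which gives a uniform error of at most \(2\lVert g-g_\epsilon\rVert_r\).

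Now suppose \(q<p\) and \(\lVert T_m f\rVert_q\le C\lVert f\rVert_p\). Take \(f\in\mathcal{S}(\mathbb{H}^n)\) with \(T_m f\neq 0\). Such an \(f\) exists because \(T_m\neq0\); equivalently, \(m\) is not \(\nu_2\)-a.e. zero, and by Plancherel we can find a Schwartz \(f\) with \(\mathcal{F}f\) not vanishing where \(m\neq0\). By linearity and Lemma \ref{commutativity of translation and T_m},
\[
T_m(f+\tau_{h_j}f)=T_mf+\tau_{h_j}T_mf .
\]
Since \(T_mf\in L^q\), we can apply the key fact on both sides: to \(T_mf\) with exponent \(q\), and to \(f\) with exponent \(p\). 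Letting \(j\to\infty\) yields
\[
2^{1/q}\lVert T_mf\rVert_q\le C\,2^{1/p}\lVert f\rVert_p .
\]
This gives the improved constant \(2^{1/p-1/q}C\), so the operator norm satisfies \(\lVert T_m\rVert\le 2^{1/p-1/q}\lVert T_m\rVert\). Iterating, or just taking the supremum over \(f\), forces \(\lVert T_m\rVert=0\) because \(2^{1/p-1/q}<1\) when \(q<p\). This contradicts \(T_mf\neq0\). The case \(p=\infty\) is excluded by the setting; if needed it is handled the same way with \(\lVert g+\tau_h g\rVert_\infty\le\ldots\), but it is not needed here.

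The main obstacle is a subtle point rather than the computation. We need \(T_mf\in L^q\), which holds by the boundedness hypothesis, and we need the norm bound to be realized on Schwartz functions so that Lemma \ref{commutativity of translation and T_m} applies. Both are immediate in this setting, so the proof is essentially routine once the asymptotic identity is in place.
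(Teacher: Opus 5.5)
Your proposal is correct and follows essentially the same route as the paper: H\"ormander's translation argument, combining Lemma \ref{commutativity of translation and T_m} with the asymptotic identity \(\lVert g+\tau_h g\rVert_r\to 2^{1/r}\lVert g\rVert_r\) to obtain \(\lVert T_m\rVert\le 2^{1/p-1/q}\lVert T_m\rVert\) when \(q<p\). You are more careful than the paper in two places: you prove the asymptotic identity, which the paper only asserts, and you state the hypothesis \(T_m\neq 0\), which the paper's final contradiction silently requires (note, though, that for \(m\equiv 0\) the statement is actually false, rather than there being ``nothing to prove'').
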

\begin{proof}
 We proceed as in the proof of \cite[Theorem 1.1]{hormander-estimates_of_translation}. 
Suppose, to the contrary, that $p>q$ and $T_m$ is bounded from $L^p(\mathbb{H}^n)$ to $ L^q(\mathbb{H}^n)$. 
Then, for $f\in L^p(\mathbb{H}^n)$,
\begin{equation}\label{translated norm}
\|f+\tau_{(u,s)}f\|_p \to 2^{1/p}\|f\|_p 
\quad \text{as } |(u,s)|\to\infty .
\end{equation}
Using the boundedness of \(T_m\) together with Lemma \ref{commutativity of translation and T_m}, we obtain 
\begin{align*}
    \lVert T_mf+\tau_{(u,s)}T_mf \rVert_q&=\lVert T_m(f+\tau_{(u,s)}f)\rVert_q\\
    &\leq \lVert T_m \rVert \lVert f+\tau_{(u,s)}f \rVert_p.
\end{align*}
Taking the limit \(|(u,s)| \to \infty\), and applying \eqref{translated norm} in the above inequality, we get 
\begin{equation*}
    2^{1/q}\lVert T_mf \rVert_q \leq 2^{1/p}\lVert T_m \rVert \lVert f \rVert_p.
\end{equation*}
Hence,
\[\lVert T_mf \rVert_q \leq 2^{1/p-1/q}\lVert T_m \rVert \lVert f \rVert_p,\]
Since \(p>q\), we have \(2^{1/p-1/q}<1\), and therefore
\[
2^{1/p-1/q}\lVert T_m \rVert < \lVert T_m \rVert,
\]
which contradicts the definition of the operator norm of \(T_m.\).

\end{proof}
As mentioned in the introduction, for \(\mathbb{R}^n\), there are two results on the boundedness of the Fourier multiplier operator. One is the classical H\"ormander's multiplier \cite{hormander-estimates_of_translation} theorem which concerns the case \(1<p\leq 2\leq q<\infty\). The other is the Lizorkin theorem \cite{Lizorkin} which applies for \(1 < p \leq q \leq 2\)
and \(2 \leq p \leq q<\infty\), however, it imposes regularity conditions on the symbol. In this paper, we will establish the H\"ormander's multiplier theorem.

\section{\(L^p-L^q\) boundedness}\label{sec3}
In this section, we prove the \(L^p\)-\(L^q\) boundedness of the operator \(T_m\) in the range \(1<p\leq 2\leq q<\infty\). As a preliminary step, we establish a Hausdorff-Young inequality for the Strichartz Fourier transform corresponding to a modified measure on the Heisenberg fan \(\Omega\).
\noindent Let \(\mu\) be the measure on \(\Omega\) defined by
\begin{equation}
    d\mu(a)=|\lambda|^{-n}\,d\nu_2(a),
\end{equation}
where \(\nu_2\) is given by \eqref{measure nu_2}.

\begin{theorem}\label{H-Y inequality theorem}
   Let \(1\leq p\leq 2\) and \(f\in \mathcal{S}(\mathbb{H}^n)\). Then
\begin{equation}\label{new h-y p<2}
\left(\int_{\Omega}
\left(\int_{\mathbb{C}^n}
\lvert \mathcal{F}(f)(a,w)\rvert^2\,dw
\right)^{p'/2}
|\lambda|^{np'/2}\,d\mu(a)
\right)^{1/p'}
\leq C_p \lVert f\rVert_p.
\end{equation}

Furthermore, if \(p>2\) and \(f\in \mathcal{S}(\mathbb{H}^n)\), then
\begin{equation}\label{h-y p>2}
\lVert f\rVert_p
\leq C_p
\left(
\int_{\Omega}
\left(
\int_{\mathbb{C}^n}
\lvert \mathcal{F}(f)(a,w)\rvert^2\,dw
\right)^{p'/2}
|\lambda|^{np'/2}\,d\mu(a)
\right)^{1/p'}.
\end{equation}
\end{theorem}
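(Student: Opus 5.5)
The plan is to view the left-hand side of \eqref{new h-y p<2} as a mixed norm and interpolate. Consider the linear operator
\[
(Af)(a,w):=|\lambda|^{n/2}\,\mathcal{F}(f)(a,w),\qquad a=(\lambda,(2k+n)|\lambda|)\in R_k .
\]
It maps functions on $\mathbb{H}^n$ to functions on $\Omega\times\mathbb{C}^n$, with the measure $d\mu$ on $\Omega$ and $dw$ on $\mathbb{C}^n$. The left-hand side of \eqref{new h-y p<2} is then exactly $\|Af\|_{L^{(2,p')}}$, where the inner $L^2$-norm is taken in $w$ and the outer $L^{p'}(d\mu)$-norm is taken in $a$. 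By the mixed-norm Riesz--Thorin theorem recalled in Section \ref{sec2}, it suffices to prove the two endpoint estimates
\[
A:L^1(\mathbb{H}^n)\to L^{(2,\infty)},\qquad A:L^2(\mathbb{H}^n)\to L^{(2,2)}.
\]
Interpolating with $1/p=(1-\theta)+\theta/2$ gives outer exponent $p'$ and inner exponent $2$, which yields \eqref{new h-y p<2} on $\mathcal{S}(\mathbb{H}^n)$.

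For the $L^2$ endpoint, observe that $|\lambda|^{n}\,d\mu=d\nu_2$. Hence
\[
\|Af\|_{(2,2)}^2=\int_\Omega\int_{\mathbb{C}^n}|\mathcal{F}(f)(a,w)|^2\,dw\,d\nu_2(a)=\|f\|_2^2
\]
by the Plancherel identity \eqref{plancherel}.

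For the $L^1$ endpoint, fix $a\in R_k$ and apply Minkowski's integral inequality to the definition \eqref{sft def1}, normalized as in \eqref{norm_sft}:
\[
\Big(\int_{\mathbb{C}^n}|\mathcal{F}(f)(a,w)|^2dw\Big)^{1/2}\le c_{n,k}\int_{\mathbb{H}^n}|f(z,t)|\,\big\|e_a\big((z,t)^{-1}(\cdot,0)\big)\big\|_{L^2(dw)}\,dz\,dt .
\]
Since $(z,t)^{-1}(w,0)=(w-z,\,-t-\tfrac12\operatorname{Im}(z\bar w))$, the modulus satisfies $|e_a((z,t)^{-1}(w,0))|=|\varphi^{n-1}_{k,\lambda}(w-z)|$. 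By \eqref{twonormofLF}, its $L^2(dw)$-norm squared is $(2\pi)^n|\lambda|^{-n}c_{n,k}^{-1}$, independent of $(z,t)$. Therefore
\[
|\lambda|^{n/2}\|\mathcal{F}(f)(a,\cdot)\|_{L^2}\le (2\pi)^{n/2}c_{n,k}^{1/2}\|f\|_1\le(2\pi)^{n/2}\|f\|_1,
\]
since $c_{n,k}\le1$. This gives the $(2,\infty)$ bound uniformly in $a$. The only point needing care here is to check that the mixed-norm interpolation theorem, stated with finite exponents, applies with outer exponent $\infty$. I expect this to be routine by the usual Riesz--Thorin argument, or by the version in \cite{Benedek-mixed_norms}.

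For \eqref{h-y p>2} I would argue by duality. Let $p>2$, so that $1\le p'<2$. Polarizing \eqref{plancherel} gives, for $f,g\in\mathcal{S}(\mathbb{H}^n)$,
\[
\int_{\mathbb{H}^n}f\bar g=\int_\Omega\int_{\mathbb{C}^n}\mathcal{F}(f)\overline{\mathcal{F}(g)}\,dw\,|\lambda|^n d\mu(a).
\]
Write $|\lambda|^n=|\lambda|^{n/2}|\lambda|^{n/2}$. Apply Cauchy--Schwarz in $w$, and then H\"older in $a$ with respect to $d\mu$ with exponents $p'$ and $p$. This bounds $|\langle f,g\rangle|$ by the right-hand side of \eqref{h-y p>2} (without $C_p$) times $\big(\int_\Omega\|\mathcal{F}(g)(a,\cdot)\|_2^{p}|\lambda|^{np/2}d\mu\big)^{1/p}$. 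By the first part applied to $g$ with exponent $p'\in[1,2)$, whose dual exponent is $p$, the latter factor is at most $C_{p'}\|g\|_{p'}$. Taking the supremum over $g\in\mathcal{S}$ with $\|g\|_{p'}\le1$, which is enough by density of $\mathcal{S}$ in $L^{p'}$, gives $\|f\|_p$ on the left and completes the proof.

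The main obstacle is the $L^1$ endpoint. One must see that the correct weight is $|\lambda|^{n/2}$ and that the Laguerre $L^2$-norm \eqref{twonormofLF} exactly cancels against the normalization $c_{n,k}$ up to a factor $c_{n,k}^{1/2}\le1$. This is what makes the choice $d\mu=|\lambda|^{-n}d\nu_2$ work.
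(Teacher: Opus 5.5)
Your proposal is correct and follows the paper's proof essentially step for step. It uses the same operator $f\mapsto |\lambda|^{n/2}\mathcal{F}(f)$, the same endpoint bounds $L^1\to L^{(2,\infty)}$ (Minkowski plus the Laguerre $L^2$-norm) and $L^2\to L^{(2,2)}$ (Plancherel with $|\lambda|^n\,d\mu=d\nu_2$), mixed-norm interpolation, and the same Cauchy--Schwarz/H\"older duality argument for $p>2$, with somewhat more care than the paper (tracking $c_{n,k}^{1/2}\le 1$ and taking the dual supremum over Schwartz $g$ by density).
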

\begin{proof}
   Consider the map \(H\), defined on the set of simple functions on \(\mathbb{H}^n\), by
\[
H(f)(\lambda,(2k+n)|\lambda|,w)
=
|\lambda|^{n/2}\mathcal{F}(f)(\lambda,(2k+n)|\lambda|,w),
\]
which takes values in the space of measurable functions on \((\Omega\times \mathbb{C}^n,d\mu\,dw)\).
   \noindent We first show that 
    \begin{equation}\label{L1 L2,inf bdd}
      \lVert H(f)\rVert_{(2,\infty)} \leq (2\pi)^{n/2}\lVert f \rVert_1.  
    \end{equation}
\noindent Let \(a=(\lambda,(2k+n)|\lambda|)\in \Omega\). Then
\begin{align*}
\left(
\int_{\mathbb{C}^n}
|H(f)(a,w)|^2\,dw
\right)^{1/2}
&=
|\lambda|^{n/2}
\left(
\int_{\mathbb{C}^n}
|\mathcal{F}(f)(a,w)|^2\,dw
\right)^{1/2}\\
&=
|\lambda|^{n/2}
\left(
\int_{\mathbb{C}^n}
\left|
\int_{\mathbb{H}^n}
c_{n,k}f(z,t)
e_a\big((z,t)^{-1}(w,0)\big)\,dz\,dt
\right|^2
dw
\right)^{1/2}\\
&\leq
|\lambda|^{n/2}c_{n,k}
\int_{\mathbb{H}^n}
\left(
\int_{\mathbb{C}^n}
|f(z,t)|^2
|\varphi^{n-1}_{k,\lambda}(-z+w)|^2\,dw
\right)^{1/2}
dz\,dt\\
&\leq
(2\pi)^{n/2}\|f\|_1.
\end{align*}
Since the above estimate holds for every \(a=(\lambda,(2k+n)|\lambda|)\in \Omega\), it follows that \eqref{L1 L2,inf bdd} holds.
    \noindent Next, applying the Plancherel theorem, we get 
    \begin{align}
        \lVert H(f) \rVert_{(2,2)}&=\left( \int_{\Omega} \int_{\mathbb{C}^n}  |\lambda|^n \lvert \mathcal{F}(f)(a,w) \rvert^2\,dw\,d\mu(a) \right)^{1/2}\nonumber\\
        &=\left( \int_{\Omega} \int_{\mathbb{C}^n}   \lvert \mathcal{F}(f)(a,w) \rvert^2\,dw\,d\nu_2(a) \right)^{1/2}\nonumber\\
        &=\lVert f \rVert_2.\label{L2 L2 bdd}
    \end{align}
    By using \eqref{L1 L2,inf bdd} and \eqref{L2 L2 bdd} in interpolation of mixed norm spaces, we deduce \eqref{new h-y p<2}.

    To establish \eqref{h-y p>2}, we use duality argument. Let \(p>2\) then we have 
    \begin{align*}
        \lVert f \rVert_p&=\sup_{\lVert g \rVert_{p'} \leq 1} \left \lvert \int_{\mathbb{H}^n} f(z,t) \overline{g(z,t)}\,dz\,dt \right \rvert\\
        &=\sup_{\lVert g \rVert_{p'} \leq 1}\left \lvert  \int_{\Omega} \int_{\mathbb{C}^n} \mathcal{F}(f)(a,w) \overline{\mathcal{F}(g)(a,w)}\,dw\,d\nu_2(a) \right \rvert\\
        &\leq \sup_{\lVert g \rVert_{p'} \leq 1} \left( \int_{\Omega} \left( \int_{\mathbb{C}^n} \left| \mathcal{F}(f)(a,w)\right|^2 \,dw\right)^{p'/2} |\lambda|^{np'/2-n}\,d\nu_2(a)  \right)^{1/p'} \\
        &\hspace{6cm}\left( \int_{\Omega} \left( \int_{\mathbb{C}^n} \left| \mathcal{F}(g)(a,w)\right|^2 \,dw\right)^{p/2} |\lambda|^{np/2-n}\,d\nu_2(a) \right)^{1/p}\\
        &= \sup_{\lVert g \rVert_{p'} \leq 1} \left( \int_{\Omega} \left( \int_{\mathbb{C}^n} \left| \mathcal{F}(f)(a,w)\right|^2 \,dw\right)^{p'/2} |\lambda|^{np'/2}\,d\mu(a)  \right)^{1/p'} \\
        &\hspace{6cm}\left( \int_{\Omega} \left( \int_{\mathbb{C}^n} \left| \mathcal{F}(g)(a,w)\right|^2 \,dw\right)^{p/2} |\lambda|^{np/2}\,d\mu(a) \right)^{1/p}\\
        &\leq C_p \sup_{\lVert g \rVert_{p'} \leq 1} \lVert g \rVert_{p'}\left( \int_{\Omega} \left( \int_{\mathbb{C}^n} \left| \mathcal{F}(f)(a,w)\right|^2 \,dw\right)^{p'/2} |\lambda|^{np'/2}\,d\mu(a)  \right)^{1/p'}\\
        &\leq C_p \left( \int_{\Omega} \left( \int_{\mathbb{C}^n} \left| \mathcal{F}(f)(a,w)\right|^2 \,dw\right)^{p'/2} |\lambda|^{np'/2}\,d\mu(a)  \right)^{1/p'}.
    \end{align*}
    This completes the proof.
\end{proof}
The following theorem establishes a Paley inequality for the Strichartz Fourier transform. It yields the boundedness of the Strichartz Fourier transform from \(L^p(\mathbb{H}^n)\) to \(L^{(2,p)}(\Omega \times \mathbb{C}^n)\), equipped with an appropriate measure. The proof uses the interpolation result stated in Theorem \ref{Marcinkiewicz} from Section \ref{sec2}.
\begin{theorem}\label{Paley's inequality theorem}
   Let \(1<p\leq 2\), and let \(\phi\in L^{1,\infty}(\Omega,d\mu(a))\) be a positive function. Then, for every \(f\in \mathcal{S}(\mathbb{H}^n)\), we have
\[
\left(
\int_{\Omega}
\left(
\int_{\mathbb{C}^n}
\left|\mathcal{F}(f)(a,w)\right|^2\,dw
\right)^{p/2}
|\lambda|^{np/2}\phi^{2-p}(a)\,d\mu(a)
\right)^{1/p}
\leq
C_p
\bigl(\|\phi\|_{1,\infty}\bigr)^{2/p-1}
\|f\|_p.
\]
\end{theorem}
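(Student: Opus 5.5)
Following the classical Hörmander approach (as adapted in \cite{Tapendu-multipliers}), we will view the left-hand side as a weighted $L^p$ norm on $\Omega$ and obtain it by Marcinkiewicz interpolation (Theorem \ref{Marcinkiewicz}). Consider on $(\Omega,\nu_\phi)$, with $d\nu_\phi(a)=\phi^2(a)\,d\mu(a)$, the sublinear operator
\[
A f(a)=\frac{|\lambda|^{n/2}\left(\int_{\mathbb{C}^n}|\mathcal{F}(f)(a,w)|^2\,dw\right)^{1/2}}{\phi(a)} .
\]
Then the left-hand side of the claim raised to the power $p$ equals $\int_\Omega (Af)^p\,\phi^2\,d\mu$, i.e. $\|Af\|^p_{L^p(\Omega,\nu_\phi)}$. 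So we need $A:L^p(\mathbb{H}^n)\to L^p(\Omega,\nu_\phi)$ with norm $\lesssim \|\phi\|_{1,\infty}^{2/p-1}$. We will prove weak type $(2,2)$ and weak type $(1,1)$, then interpolate for $1<p<2$ (the case $p=2$ is the strong $(2,2)$ bound itself).

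\emph{Weak (2,2).} By the Plancherel identity \eqref{plancherel} and $d\mu=|\lambda|^{-n}d\nu_2$,
\[
\nu_\phi\{a: Af(a)>y\}\le \frac{1}{y^2}\int_\Omega (Af)^2\phi^2\,d\mu=\frac{1}{y^2}\int_\Omega\int_{\mathbb{C}^n}|\mathcal{F}(f)|^2\,dw\,d\nu_2=\frac{\|f\|_2^2}{y^2}.
\]
This in fact gives a strong bound with constant $1$.

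\emph{Weak (1,1).} By the estimate \eqref{L1 L2,inf bdd} in the proof of Theorem \ref{H-Y inequality theorem}, we have $|\lambda|^{n/2}\|\mathcal{F}(f)(a,\cdot)\|_{L^2(\mathbb{C}^n)}\le (2\pi)^{n/2}\|f\|_1$ for every $a$. Hence $Af(a)>y$ forces $\phi(a)<(2\pi)^{n/2}\|f\|_1/y=:w$. Therefore
\[
\nu_\phi\{Af>y\}\le \int_{\{\phi<w\}}\phi^2\,d\mu .
\]
The key computation, and the step I expect to need the most care, is the layer-cake bound
\[
\int_{\{\phi<w\}}\phi^2\,d\mu\le \int_0^{w}2s\,\mu\{\phi>s\}\,ds\le 2\|\phi\|_{1,\infty}\,w .
\]
It uses $\mu\{\phi>s\}\le \|\phi\|_{1,\infty}/s$, where $\|\phi\|_{1,\infty}=\sup_s s\,\mu\{\phi>s\}$ (equivalent to the rearrangement definition). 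Thus $\nu_\phi\{Af>y\}\le C\|\phi\|_{1,\infty}\|f\|_1/y$, which is weak type $(1,1)$ with constant $\lesssim\|\phi\|_{1,\infty}$.

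\emph{Interpolation.} Theorem \ref{Marcinkiewicz} as stated requires $q_1\ne q_2$, so instead we will use the classical Marcinkiewicz theorem for sublinear operators (diagonal case). It is applicable since $A$ is sublinear, the $w$-integral being a norm. Interpolating with $1/p=(1-t)+t/2$, i.e. $t=2-2/p$, gives
\[
\|Af\|_{L^p(\nu_\phi)}\le C_p\,\|\phi\|_{1,\infty}^{1-t}\|f\|_p=C_p\|\phi\|_{1,\infty}^{2/p-1}\|f\|_p ,
\]
which is the claim after unwinding the definitions. If one wants to track the constant without relying on the interpolation constant formula, rescale $\phi\mapsto\phi/\|\phi\|_{1,\infty}$: the left side scales as $\|\phi\|^{(2-p)/p}$, so it suffices to treat $\|\phi\|_{1,\infty}=1$. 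The main obstacles are checking measurability and sublinearity of $A$ on simple functions, passing to $\mathcal{S}(\mathbb{H}^n)$ by density, and the layer-cake estimate above.
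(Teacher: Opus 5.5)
Your proposal is correct and follows the paper's proof essentially line by line: the same operator $f\mapsto |\lambda|^{n/2}\|\mathcal{F}(f)(a,\cdot)\|_2/\phi(a)$ on $(\Omega,\phi^2\,d\mu)$, strong type $(2,2)$ from Plancherel, weak type $(1,1)$ from the pointwise bound \eqref{L1 L2,inf bdd} together with the same layer-cake estimate, and then Marcinkiewicz interpolation. One small remark: Theorem \ref{Marcinkiewicz} in fact applies directly, because the endpoints are $(p_1,q_1)=(1,1)$ and $(p_2,q_2)=(2,2)$, so $q_1=1\neq 2=q_2$; your switch to the classical diagonal Marcinkiewicz theorem is harmless but not needed.
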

\begin{proof}
   Consider the map \(T\), defined on \(L^{1,1}(\mathbb{H}^n)+L^{2,1}(\mathbb{H}^n)\) and taking values in the space of measurable functions on \((\Omega,\phi^2(a)\,d\mu(a))\), by
\[
T(f)(a)=
\begin{cases}
\displaystyle
\frac{|\lambda|^{n/2}}
{\phi(\lambda,(2k+n)|\lambda|)}
\left\|
\mathcal{F}(f)(\lambda,(2k+n)|\lambda|,\cdot)
\right\|_2,
& \text{if } a=(\lambda,(2k+n)|\lambda|)\in R_k,\\[1.2em]
0,
& \text{if } a=(0,\tau)\in R_{\infty}.
\end{cases}
\]
Observe that for \(f\in L^2(\mathbb{H}^n)\), the Plancherel theorem yields
    \begin{align*}
        \lVert T(f) \rVert_2^2&=\int_{\Omega}\frac{|\lambda|^n}{\phi^2(\lambda,(2k+n)|\lambda|)}\left(\int_{\mathbb{C}^n}|\mathcal{F}(F)(\lambda,(2k+n)|\lambda|,w)|^2\,dw \right)\phi^2(\lambda,(2k+n)|\lambda|)\,d\mu(a)\\
        &=\lVert f \rVert_2^2.
    \end{align*}
   Hence, the operator \(T\) is of strong type \((2,2)\), and therefore also of weak type \((2,2)\).

\noindent Next, we prove that \(T\) is of weak type \((1,1)\). Equivalently, it suffices to show that
\begin{equation}\label{equiv weak 1,1}
\left| \{a\in \Omega: |Tf(a)|>s \} \right|_{\phi^2}
\leq
C\|\phi\|_{1,\infty}\frac{\|f\|_1}{s},
\qquad s>0.
\end{equation}
Here, \(|\cdot|_{\phi^2}\) denotes the weighted measure on \(\Omega\) associated with the weight \(\phi^2(a)\).

Recalling the hypothesis, \(\phi\in L^{1,\infty}(\Omega)\). Hence, there exists a constant \(C\) such that
\begin{equation}\label{phi-in-L1inf}
\left|
\{a\in \Omega:\phi(a)>s\}
\right|
\leq
\frac{C}{s}.
\end{equation}
Note that, from the proof of Theorem \ref{H-Y inequality theorem}, we have
\begin{multline*}
\left|
\left\{
(\lambda,(2k+n)|\lambda|):
\frac{|\lambda|^{n/2}}
{\phi(\lambda,(2k+n)|\lambda|)}
\left\|
\mathcal{F}(f)(\lambda,(2k+n)|\lambda|,\cdot)
\right\|_2
>s
\right\}
\right|_{\phi^2}
\\
\leq
\left|
\left\{
(\lambda,(2k+n)|\lambda|):
\frac{\|f\|_1}
{\phi(\lambda,(2k+n)|\lambda|)}
>(2\pi)^{-n/2}s
\right\}
\right|_{\phi^2}.
\end{multline*}
Therefore, to establish \eqref{equiv weak 1,1}, it suffices to prove that
\begin{equation}\label{claim for weak type}
\left|
\left\{
(\lambda,(2k+n)|\lambda|):
\frac{\|f\|_1}
{\phi(\lambda,(2k+n)|\lambda|)}
>
(2\pi)^{-n/2}s
\right\}
\right|_{\phi^2}
\leq
2C\frac{\|f\|_1}{s},
\end{equation}
where the constant \(C\) is independent of \(s\).

\noindent Assume that \(\|f\|_1\neq 0\), and set
\[
\gamma=(2\pi)^{-n/2}\frac{s}{\|f\|_1}.
\]
Using the layer cake representation theorem \cite{Lieb-Loss}, we obtain
\begin{align*}
&\left|
\left\{
(\lambda,(2k+n)|\lambda|):
\frac{1}{\phi(\lambda,(2k+n)|\lambda|)}
>\gamma
\right\}
\right|_{\phi^2}
\\
&=
\int_0^\infty
2t
\left|
\left\{
(\lambda,(2k+n)|\lambda|):
t<\phi(\lambda,(2k+n)|\lambda|)
<\frac{1}{\gamma}
\right\}
\right|
dt
\\
&=
\int_0^{1/\gamma}
2t\bigl(m(t)-m(1/\gamma)\bigr)\,dt,
\end{align*}
where
\[
m(t)
=
\left|
\left\{
(\lambda,(2k+n)|\lambda|):
\phi(\lambda,(2k+n)|\lambda|)>t
\right\}
\right|.
\]
Therefore,
\begin{align*}
\left|
\left\{
(\lambda,(2k+n)|\lambda|):
\frac{1}{\phi(\lambda,(2k+n)|\lambda|)}
>\gamma
\right\}
\right|_{\phi^2}
&=
\int_0^{1/\gamma}
2t\,m(t)\,dt
-\frac{1}{\gamma^2}m(1/\gamma)
\\
&\leq
\int_0^{1/\gamma}
2t\,m(t)\,dt
\\
&\leq
2\frac{C}{\gamma}.
\end{align*}
Substituting this estimate into \eqref{claim for weak type}, we conclude that \(T\) is of weak type \((1,1)\). The desired result now follows from the Marcinkiewicz interpolation theorem.
 \end{proof}
Combining the previous two theorems, we obtain the following Hausdorff-Young-Paley inequality. Its proof relies on an interpolation theorem for mixed norm spaces involving a change of measure. The following result follows from \cite[Corollary 7.3]{Tapendu-multipliers}.
\begin{lemma}\label{interpolation of mixed spaces weights}
    Let \(X=X_1 \times X_2\) and \(Y=Y_1 \times Y_2\) be two mixed norm spaces with product measures \(dx\) and \(dy\) respectively, and \(T\) is a linear map from \(X\) to \(Y\). Suppose \(w_j:Y_1 \to \mathbb{R}^+\) be non negative weight functions which are integrable on sets of finite measure, for \(j=1,2\). If for all simple functions \(f\) on X, 
    $$
    \lVert T(f) \rVert_{L^{(p_1^j,p_2^j)}(Y,w_jdy)} \leq M_j \lVert f \rVert_{L^{(q,q)}(X,dx)}, \text{ for } j=1,2.
    $$
   Then 
   $$
   \lVert T(f) \rVert_{L^{(p_1^\theta,p_2^\theta)}(Y,w_\theta dy)} \leq M_1^{1-\theta}M_2^\theta \lVert f \rVert_{L^{(q,q)(X,dx)}},
   $$
   where \(0<\theta<1\) and 
    \begin{equation*}
        \frac{1}{p^\theta_j}=\frac{1-\theta}{p^1_j}+\frac{\theta}{p_j^2}, \text{ and } \frac{1}{q^\theta_j}=\frac{1-\theta}{q^1_j}+\frac{\theta}{q_j^2},
    \end{equation*}
    and 
    $$
    w_\theta=w_1^{\frac{p^\theta_2(1-\theta)}{p_2^1}} w_2^{\frac{p^\theta_2 \theta}{p_2^2}}.
    $$
\end{lemma}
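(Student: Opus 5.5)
The plan is to recognize both sides as vector-valued weighted Lebesgue spaces and apply the Stein--Weiss interpolation theorem with change of measure. I would argue directly by Stein's complex method, which is essentially the route behind \cite[Corollary 7.3]{Tapendu-multipliers}.

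\textbf{Reformulation.} With the convention of the paper, the inner exponent $p_1$ acts on $Y_2$ and the outer exponent $p_2$ acts on $Y_1$, and the weight depends only on $y_1\in Y_1$. Hence
\[
L^{(p_1,p_2)}(Y,w\,dy)=L^{p_2}\bigl(Y_1,w\,dy_1;L^{p_1}(Y_2)\bigr).
\]
The source space $L^{(q,q)}(X,dx)=L^q(X)$ is the same at both endpoints. The claim therefore reduces to the identification
\[
\bigl[L^{p_2^1}_{w_1}(L^{p_1^1}),\,L^{p_2^2}_{w_2}(L^{p_1^2})\bigr]_\theta = L^{p_2^\theta}_{w_\theta}(L^{p_1^\theta}),
\qquad
w_\theta^{1/p_2^\theta}=w_1^{(1-\theta)/p_2^1}\,w_2^{\theta/p_2^2}.
\]
This is exactly the formula for $w_\theta$ in the statement. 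Boundedness of $T$ then follows from the abstract interpolation property. To stay self-contained, I would instead prove the operator bound directly.

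\textbf{Direct argument.}
\begin{enumerate}
\item Fix a simple $f$ with $\|f\|_q=1$. By duality of weighted mixed norm spaces under the pairing $\int_Y F\,G\,w_\theta\,dy$, it suffices to bound $\bigl|\int_Y T(f)\,G\,w_\theta\,dy\bigr|$ over simple $G$ of unit norm in $L^{((p_1^\theta)',(p_2^\theta)')}(Y,w_\theta\,dy)$.
\item Write $G(y_1,y_2)=|G|\,\mathrm{sgn}\,G$. Define an analytic family $G_z$, $0\le \operatorname{Re} z\le 1$, by raising the inner factor $|G(y_1,\cdot)|/\|G(y_1,\cdot)\|_{(p_1^\theta)'}$ and the outer factor $\|G(y_1,\cdot)\|_{(p_1^\theta)'}$ to complex powers affine in $z$. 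The powers are chosen so that at $z=\theta$ one recovers $G$, on $\operatorname{Re}z=0$ the function has unit norm in the dual space for the exponents $(p_1^1,p_2^1)$, and on $\operatorname{Re}z=1$ it has unit norm for $(p_1^2,p_2^2)$.
\item Simultaneously insert the weight factor $w_1(y_1)^{(1-z)\alpha}\,w_2(y_1)^{z\beta}$, with $\alpha,\beta$ fixed so that at $z=\theta$ it reproduces $w_\theta$. On each boundary line the pairing should then become $\int T(f)\,G_{it}\,w_1\,dy$ or $\int T(f)\,G_{1+it}\,w_2\,dy$, up to unimodular factors.
\item Set $\Phi(z)=\int_Y T(f)\,G_z\,w_1^{(1-z)\alpha}w_2^{z\beta}\,dy$. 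Since $f$ is fixed and $T$ is linear, $\Phi$ is analytic. Hölder's inequality in mixed norms together with the two hypotheses gives $|\Phi(it)|\le M_1$ and $|\Phi(1+it)|\le M_2$.
\item The three lines lemma yields $|\Phi(\theta)|\le M_1^{1-\theta}M_2^\theta$, which is the desired inequality.
\end{enumerate}
The key algebraic check in steps 2--3 is matching the exponents of $w_1$ and $w_2$. The weight in the dual norm on each boundary line must combine with the inserted weight factor to give exactly $w_j\,dy$. This forces $w_\theta^{1/p_2^\theta}=w_1^{(1-\theta)/p_2^1}w_2^{\theta/p_2^2}$.

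\textbf{Main obstacle.} I expect the main difficulty to be justifying analyticity and admissible growth of $\Phi$ in the strip, given that the weights may vanish or be unbounded. The hypothesis that each $w_j$ is integrable on sets of finite measure is what makes $\Phi$ finite for simple $G$. I would first replace $w_j$ by the truncations $\min(\max(w_j,\varepsilon),N)$ on the (finite-measure) support of $G$. For these bounded, nonvanishing weights, $\Phi$ is a finite combination of exponentials in $z$ times integrals of $T(f)$, so it is bounded and analytic. After establishing the estimate for the truncated weights, I would let $\varepsilon\to0$ and $N\to\infty$ using monotone and dominated convergence.
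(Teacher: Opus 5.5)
Your first paragraph is sound. The identification $L^{(p_1,p_2)}(Y,w\,dy)=L^{p_2}(Y_1,w\,dy_1;L^{p_1}(Y_2))$, the weight relation $w_\theta^{1/p_2^\theta}=w_1^{(1-\theta)/p_2^1}w_2^{\theta/p_2^2}$, and an appeal to vector-valued Stein--Weiss interpolation are essentially all the paper offers: it gives no argument beyond citing \cite[Corollary 7.3]{Tapendu-multipliers}. Your self-contained complex-method argument, however, does not close as you specify it.

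An inserted factor $w_1^{(1-z)\alpha}w_2^{z\beta}$ that equals $w_\theta$ at $z=\theta$ forces $\alpha=p_2^\theta/p_2^1$. Its modulus on $\operatorname{Re}z=0$ is then $w_1^{p_2^\theta/p_2^1}$, not $w_1$. Moreover, a family $G_z$ built only from complex powers of $|G|/g$ and $g=\lVert G(y_1,\cdot)\rVert_{(p_1^\theta)'}$ is normalized on the boundary lines only against the weight $w_\theta$. Hölder against $\lVert T(f)\rVert_{L^{(p_1^1,p_2^1)}(Y,w_1dy)}$ therefore leaves the factor $w_1^{\alpha-1/p_2^1}w_\theta^{-1/(p_2^1)'}$, which in general involves $w_2$ and is not $1$.

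The weights have to be built into the family itself. Take $K=G\,w_\theta^{1/(p_2^\theta)'}$, which has unit norm in the \emph{unweighted} dual mixed space, let $K_z$ be the usual unweighted mixed-norm family, and set
\[
\Phi(z)=\int_Y T(f)\,K_z\,w_1^{(1-z)/p_2^1}\,w_2^{z/p_2^2}\,dy .
\]
Then $\Phi(\theta)=\int_Y T(f)\,G\,w_\theta\,dy$ and $|\Phi(it)|\le\lVert T(f)\rVert_{L^{(p_1^1,p_2^1)}(Y,w_1dy)}\le M_1$, and similarly on $\operatorname{Re}z=1$.

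Your regularization also fails. Replacing $w_j$ by $\max(w_j,\varepsilon)$ destroys the endpoint hypotheses, since nothing controls $T(f)$ where $w_j$ is small. Instead, restrict the support of $K$ to $\{\varepsilon\le w_1,w_2\le N\}$; note that $w_\theta=0$ wherever $w_1w_2=0$, so nothing is lost. Boundedness of $\Phi$ on the strip then comes from the endpoint bounds via Hölder in $\operatorname{Re}z$, not from local integrability of the weights.

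That said, none of this machinery is needed. Because the source norm $L^{(q,q)}(X,dx)$ is the same at both endpoints, the lemma is just the log-convexity estimate
\[
\lVert F\rVert_{L^{(p_1^\theta,p_2^\theta)}(Y,w_\theta dy)}\le\lVert F\rVert_{L^{(p_1^1,p_2^1)}(Y,w_1dy)}^{1-\theta}\,\lVert F\rVert_{L^{(p_1^2,p_2^2)}(Y,w_2dy)}^{\theta}
\]
applied to the single function $F=T(f)$. Set $h_j(y_1)=\lVert F(y_1,\cdot)\rVert_{L^{p_1^j}(Y_2)}$ for $j=1,2,\theta$. Hölder in $y_2$ gives $h_\theta\le h_1^{1-\theta}h_2^{\theta}$. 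The formula for $w_\theta$ is exactly what lets you then write
\[
h_\theta^{p_2^\theta}w_\theta\le\bigl(h_1^{p_2^1}w_1\bigr)^{(1-\theta)p_2^\theta/p_2^1}\bigl(h_2^{p_2^2}w_2\bigr)^{\theta p_2^\theta/p_2^2}.
\]
Hölder in $y_1$ with the conjugate exponents $p_2^1/((1-\theta)p_2^\theta)$ and $p_2^2/(\theta p_2^\theta)$ finishes the proof. This route needs no analytic family, no truncation, no integrability assumption on $w_j$, and not even linearity of $T$.
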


\noindent We now state and prove the Hausdorff-Young-Paley inequality in our framework.
\begin{theorem}\label{hausdorff-young-paley}
    Let \(1< p \leq 2\), \(1<p\leq b \leq p'<\infty\) and \(\phi \in L^{1,\infty}(\Omega,d\mu(a))\). Then 
    \begin{equation*}
        \left( \int_{\Omega}\left( \int_{\mathbb{C}^n} |\lambda|^n|\mathcal{F}(f)(a,w)|^2\,dw \right)^{b/2}\phi^{1-b/p'}\,d\mu(a)\right)^{1/b}\leq C_b(\lVert \phi \rVert_{1,\infty})^{1/b-1/p'} \lVert f \rVert_p.
    \end{equation*}
\end{theorem}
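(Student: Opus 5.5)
We interpolate between the Paley inequality (Theorem \ref{Paley's inequality theorem}) and the Hausdorff--Young inequality (Theorem \ref{H-Y inequality theorem}). For this we use the weighted mixed norm interpolation of Lemma \ref{interpolation of mixed spaces weights}. Fix $f$ and consider the linear map
\[
Af(a,w)=|\lambda|^{n/2}\mathcal{F}(f)(a,w),
\]
from $L^p(\mathbb{H}^n)$ (viewed as $L^{(p,p)}$) into functions on $\Omega\times\mathbb{C}^n$. We measure the output in the mixed norm $L^{(2,b)}$, integrating first in $w\in\mathbb{C}^n$ with $dw$ and then in $a\in\Omega$ with a weighted measure $w_j\,d\mu$. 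With this notation the left-hand side of the claimed inequality is exactly $\|Af\|_{L^{(2,b)}(\Omega\times\mathbb{C}^n,\phi^{1-b/p'}d\mu\,dw)}$. The two endpoint estimates are the following.

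\emph{Endpoint 1 (Paley).} Theorem \ref{Paley's inequality theorem} says that $\|Af\|_{L^{(2,p)}(\phi^{2-p}d\mu\,dw)}\le C_p\|\phi\|_{1,\infty}^{2/p-1}\|f\|_p$. So we take $p_1^1=2$, $p_2^1=p$, $w_1=\phi^{2-p}$ and $M_1=C\|\phi\|_{1,\infty}^{2/p-1}$.

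\emph{Endpoint 2 (Hausdorff--Young).} Inequality \eqref{new h-y p<2} says that $\|Af\|_{L^{(2,p')}(d\mu\,dw)}\le C_p\|f\|_p$. So we take $p_1^2=2$, $p_2^2=p'$, $w_2\equiv1$ and $M_2=C_p$.

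Both estimates share the same input space $L^{(p,p)}$, which is what Lemma \ref{interpolation of mixed spaces weights} requires. We verify the lemma's hypotheses on the weights: $w_2\equiv 1$ is trivially locally integrable. For $w_1=\phi^{2-p}$ we have $0\le 2-p<1$, and $\phi\in L^{1,\infty}$ lies in $L^{r}_{loc}$ for every $r<1$, hence $\phi^{2-p}$ is integrable on sets of finite $\mu$-measure. When $p=2$, $\phi^0=1$ and nothing is needed. When $b=p$ or $b=p'$ the statement is just one of the endpoints, so we may assume $p<b<p'$.

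Choose $\theta\in(0,1)$ by $\frac1b=\frac{1-\theta}{p}+\frac{\theta}{p'}$. The inner exponent stays $2$, and the interpolated outer exponent is $p_2^\theta=b$. The interpolated weight is
\[
w_\theta=\phi^{(2-p)\frac{b(1-\theta)}{p}}.
\]
The main computation is to check that $(2-p)\frac{b(1-\theta)}{p}=1-\frac{b}{p'}$. From the definition of $\theta$ we get
\[
1-\theta=\frac{1/b-1/p'}{1/p-1/p'}=\frac{p(p'-b)}{b(p'-p)}.
\]
Therefore
\[
(2-p)\frac{b(1-\theta)}{p}=\frac{(2-p)(p'-b)}{p'-p}.
\]
Using $p'=p/(p-1)$ we find $p'-p=p(2-p)/(p-1)=p'(2-p)$, so this exponent equals $(p'-b)/p'=1-b/p'$, as required.

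The constant is then $M_1^{1-\theta}M_2^{\theta}\le C_b\|\phi\|_{1,\infty}^{(2/p-1)(1-\theta)}$. From the same identities,
\[
\Big(\frac2p-1\Big)(1-\theta)=\frac{2-p}{p}\cdot\frac{p(p'-b)}{b\,p'(2-p)}=\frac{1}{b}-\frac{1}{p'},
\]
which gives the stated power of $\|\phi\|_{1,\infty}$.

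The only delicate point is legitimacy. Lemma \ref{interpolation of mixed spaces weights} is stated for simple functions, so we first prove the inequality for simple $f$ and then pass to $f\in\mathcal{S}(\mathbb{H}^n)$ by density. For this we approximate $f$ in $L^p$ and use the endpoint bounds to control the convergence of $\mathcal{F}$. We also need to check that the Paley estimate is genuinely a statement about the linear operator $A$ in the mixed norm, rather than about the sublinear operator $T$ used in its proof. This holds because $T(f)(a)\phi(a)$ is exactly $\|Af(a,\cdot)\|_{L^2(\mathbb{C}^n)}$, so the two formulations coincide.
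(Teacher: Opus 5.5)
Your proposal is correct and follows the paper's route: you interpolate the Hausdorff--Young bound into $L^{(2,p')}(d\mu\,dw)$ against the Paley bound into $L^{(2,p)}(\phi^{2-p}\,d\mu\,dw)$ using the weighted mixed-norm interpolation of Lemma \ref{interpolation of mixed spaces weights}. Your explicit checks of $\theta$, of the interpolated weight exponent $1-b/p'$, of the constant's power $1/b-1/p'$, and of the local integrability of $\phi^{2-p}$ are right, and they supply exactly the computation the paper leaves to ``proceeding as in'' the cited reference.
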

\begin{proof}
   Let \(1<p\leq 2\), and define the operator \(T\) on \(L^p(\mathbb{H}^n)\), taking values in the space of measurable functions on \(\Omega\times\mathbb{C}^n\), by
\[
T(f)(a,w)=
\begin{cases}
|\lambda|^{n/2}\mathcal{F}(f)(\lambda,(2k+n)|\lambda|,w),
& \text{if } a=(\lambda,(2k+n)|\lambda|)\in R_k,\\
0,
& \text{if } a=(0,\tau)\in R_{\infty}.
\end{cases}
\]
From \eqref{new h-y p<2}, it follows that \(T\) is bounded from \(L^p(\mathbb{H}^n)\) to \(L^{(2,p')}(\Omega\times\mathbb{C}^n,d\mu\,dw)\). Further, by Theorem \ref{Paley's inequality theorem}, \(T\) is bounded from \(L^p(\mathbb{H}^n)\) to \(L^{(2,p)}(\Omega\times\mathbb{C}^n,\phi^{2-p}\,d\mu\,dw)\).

Moreover, it is straightforward to verify that \(\phi^{2-p}\) is locally integrable on \((\Omega,d\mu)\). Therefore, applying Lemma \ref{interpolation of mixed spaces weights} and proceeding as in the proof of \cite[Theorem 3.1]{Tapendu-multipliers}, we obtain that
    \begin{equation*}
        \left( \int_{\Omega}\left( \int_{\mathbb{C}^n}|\lambda|^n |\mathcal{F}(f)(a,w)|^2\,dw  \right)^{b/2}\phi^{1-b/p'}\,d\mu(a)\right)^{1/b} \leq C_b (\lVert \phi_{1,\infty}\rVert)^{1/b-1/p'}\lVert f \rVert_p.
    \end{equation*}
    This completes the proof.
\end{proof}
\noindent We now establish the \(L^p\)-\(L^q\) boundedness of the multiplier operator \(T_m\).

\begin{theorem}\label{Lp Lq boundedness theorem}
Let \(1<p\leq 2\leq q<\infty\), and let \(m\) be a bounded function on \(\Omega\) satisfying
\begin{equation}\label{condition on m}
\sup_{\alpha>0}
\alpha
\left(
\int_{\{a\in \Omega:\,|m(a)|>\alpha\}}
d\mu(a)
\right)^{1/p-1/q}
<\infty.
\end{equation}
Then the operator \(T_m\), defined by \eqref{operator T_m}, extends to a bounded operator from \(L^p(\mathbb{H}^n)\) to \(L^q(\mathbb{H}^n)\).
\end{theorem}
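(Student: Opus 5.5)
Following the scheme of \cite[Theorem 4.1]{Tapendu-multipliers}, which goes back to H\"ormander \cite{hormander-estimates_of_translation}, I would first reduce to the case \(p\leq q'\) by duality and then apply Theorem \ref{hausdorff-young-paley} with a weight built from \(m\), followed by \eqref{h-y p>2}.

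\emph{Reduction.} Suppose first that \(p\leq q'\); the case \(q'\leq p\) follows by duality. Indeed, \(T_m^*=T_{\overline m}\) by the Plancherel identity \eqref{plancherel}. Moreover, condition \eqref{condition on m} is symmetric under \((p,q)\mapsto(q',p')\), since \(1/q'-1/p'=1/p-1/q\). So boundedness of \(T_{\overline m}:L^{q'}\to L^{p'}\) yields boundedness of \(T_m:L^p\to L^q\).

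\emph{Main estimate.} Let \(f\in\mathcal S(\mathbb H^n)\). By \eqref{h-y p>2} applied with exponent \(q\geq 2\) (when \(q=2\) use Plancherel), we have
\[
\|T_mf\|_q\leq C_q\Bigl(\int_\Omega\Bigl(\int_{\mathbb C^n}|\mathcal F(f)(a,w)|^2\,dw\Bigr)^{q'/2}|m(a)|^{q'}|\lambda|^{nq'/2}\,d\mu(a)\Bigr)^{1/q'} .
\]
Set \(b=q'\), so that \(p\leq b\leq 2\leq p'\) by the reduction. Choose \(\phi=|m|^{r}\) with \(r\) determined by \(\phi^{1-b/p'}=|m|^{b}\), that is,
\[
r=\frac{b}{1-b/p'},\qquad \frac1r=\frac1{q'}-\frac1{p'}=\frac1p-\frac1q .
\]
If \(p=q'\), then \(b=p'\) would force a degenerate exponent. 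But in that case \(q=p'\), and the estimate \(\|\cdot\|\lesssim\|m\|_\infty\) follows directly from \eqref{new h-y p<2} together with boundedness of \(m\). So assume \(b<p'\).

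Theorem \ref{hausdorff-young-paley} then gives
\[
\|T_mf\|_q\lesssim \|\phi\|_{1,\infty}^{1/q'-1/p'}\|f\|_p=\|\phi\|_{1,\infty}^{1/p-1/q}\|f\|_p .
\]
It remains to check that \(\|\phi\|_{1,\infty}\) is finite. We have
\[
\|\phi\|_{1,\infty}=\sup_s s\,\mu\{|m|^r>s\}=\sup_\alpha \alpha^r\mu\{|m|>\alpha\},
\]
and raising this to the power \(1/r=1/p-1/q\) gives exactly the quantity in \eqref{condition on m}. Hence
\[
\|T_mf\|_q\lesssim \sup_\alpha\alpha\,\mu\{|m|>\alpha\}^{1/p-1/q}\,\|f\|_p .
\]
Density of \(\mathcal S\) in \(L^p\) then gives the extension.

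\emph{Expected obstacle.} The delicate points are the following.
\begin{enumerate}
\item The use of \eqref{h-y p>2} for \(T_mf\) requires \(T_mf\) to be a Schwartz function. I would instead apply it to the dual pairing directly: bound \(|\langle T_mf,g\rangle|\) by H\"older's inequality in the mixed norm, exactly as in the proof of \eqref{h-y p>2}, using \eqref{new h-y p<2} for \(g\in L^{q'}\).
\item The \(R_\infty\) ray must be handled; it has \(\mu\)-measure zero, so \(m\) there is irrelevant.
\item \(\phi\) must be positive, as required in Theorem \ref{hausdorff-young-paley}. The zero set of \(m\) contributes nothing, so one may replace \(\phi\) by \(\phi+\epsilon\psi\) with a small integrable positive \(\psi\) and let \(\epsilon\to0\).
\end{enumerate}
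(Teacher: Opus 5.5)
Your proposal is correct and follows the paper's proof essentially step for step: reduction to $p\le q'$ via $T_m^\ast=T_{\overline m}$, the bound \eqref{h-y p>2} for $\|T_mf\|_q$, then Theorem \ref{hausdorff-young-paley} with $b=q'$ and $\phi=|m|^r$, $1/r=1/p-1/q$. Your remarks on running \eqref{h-y p>2} through the dual pairing (since $T_mf$ need not be Schwartz) and on positivity of $\phi$ tidy up points the paper passes over.

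One slip is in your edge case. The degenerate situation $b=p'$ means $q'=p'$, i.e.\ $p=q=2$, where Plancherel and $\|m\|_\infty$ suffice; it is not the case $p=q'$. For $p=q'<2$ you have $b=p<p'$, so the main argument applies. Boundedness of $m$ alone would not give $L^p\to L^{p'}$ there (e.g.\ $m\equiv 1$).
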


\begin{proof}
   It suffices to consider the case \(p\leq q'\), since when \(p\geq q'\), one may pass to the adjoint operator \(T_m^\ast=T_{\overline{m}}\) and thereby reduce the problem to the case \(p\leq q'\).
    
\noindent    By Hausdorff-Young inequality, we have 
    \begin{equation}\label{bound of T_mf_q}
        \lVert T_mf \rVert_q \leq C_q \left(\int_{\Omega} \left(\int_{\mathbb{C}^n}\lvert \mathcal{F}(f)(a,w) \rvert^2\,dw \right)^{q'/2} m(a)^{q'}|\lambda|^{nq'/2}\,d\mu(a)\right)^{1/q'}.
    \end{equation}
    Taking \(\phi=|m|^r\), where \(\frac{1}{r}=\frac{1}{p}-\frac{1}{q},\; b=q'\) in Theorem \ref{hausdorff-young-paley}, we get 
    \begin{equation*}
        \left( \int_{\Omega}\left( \int_{\mathbb{C}^n} |\lambda|^n|\mathcal{F}(f)(a,w)|^2\,dw \right)^{q'/2}|m|^{r(1-q'/p')}\,d\mu(a)\right)^{1/q'}\leq C_p(\lVert m^r \rVert_{1,\infty})^{1/q'-1/p'} \lVert f \rVert_p.
    \end{equation*}
    Simplifying the above expression, we get 
    \begin{equation}\label{simplying H-Y Paley}
        \left( \int_{\Omega}\left( \int_{\mathbb{C}^n} |\mathcal{F}(f)(a,w)|^2\,dw \right)^{q'/2}|m|^{q'}|\lambda|^{nq'/2}\,d\mu(a)\right)^{1/q'}\leq C_p(\lVert m^r \rVert_{1,\infty})^{1/q'-1/p'} \lVert f \rVert_p.
    \end{equation}
    Furthermore, by definition, we have 
    \begin{align*}
        (\lVert m^r \rVert_{1,\infty})^{1/q'-1/p'}&=(\lVert m^r \rVert_{1,\infty})^{1/r}\\
        &=\left(\sup_{\alpha>0} \alpha^r \int_{\{a \in \Omega:|m(a)|>\alpha\}} d\mu(a) \right)^{1/r}\\
        &=\sup_{\alpha>0} \alpha \left(\int_{\{a\in \Omega:|m(a)|>\alpha\}} d\mu(a) \right)^{1/r}\\
        &=\sup_{\alpha>0} \alpha \left(\int_{\{a\in \Omega:|m(a)|>\alpha\}} d\mu(a) \right)^{1/p-1/q.}
    \end{align*}
   Combining this estimate with \eqref{bound of T_mf_q} and \eqref{simplying H-Y Paley}, we obtain the desired conclusion.
\end{proof}
\begin{corollary}
    Let \(\beta, \gamma >0\) and \(m\) be a measurable function on \(\Omega\) which is identically \(0\) on the ray \(R_\infty\) and on other rays is defined as
    \begin{equation*}
        m(\lambda,(2k+n)|\lambda|)= \begin{cases}
           (1+(2k+n)|\lambda|)^{-\gamma}(1+k)^{-\beta} & \text{ if } k>0,\\
           0 & \text{if } k=0. 
        \end{cases} 
    \end{equation*}
    Then \(T_m\) defines \(L^p-L^q\) bounded Fourier multiplier if 
    \begin{equation*}
        n-2<\frac{\beta}{\gamma}(n+1) \text{ \,\, and \,\,} \frac{n+1}{\gamma}\left(\frac{1}{p}-\frac{1}{q} \right)\leq1.
    \end{equation*}
\end{corollary}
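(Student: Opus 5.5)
The plan is to apply Theorem \ref{Lp Lq boundedness theorem} directly. Since $0\le m\le 1$ everywhere ($m$ vanishes on $R_\infty$ and on $R_0$, and both factors are at most $1$ on $R_k$ for $k\ge1$), $m$ is bounded. So the whole task is to verify condition \eqref{condition on m}, which amounts to estimating the distribution function $\mu\{a:|m(a)|>\alpha\}$. For $\alpha\ge 1$ this set is empty, so only $0<\alpha<1$ matters.

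First I make the measure explicit. By \eqref{measure nu_2} and $d\mu=|\lambda|^{-n}d\nu_2$,
\[
\int_\Omega \phi\,d\mu=(2\pi)^{-2n-1}\int_{-\infty}^{\infty}\sum_{k=0}^\infty \binom{k+n-1}{k}^{2}\phi(\lambda,(2k+n)|\lambda|)\,|\lambda|^{n}\,d\lambda ,
\]
with $\binom{k+n-1}{k}\asymp (1+k)^{n-1}$. Fix $k\ge1$. The condition $m>\alpha$ reads $(1+(2k+n)|\lambda|)^{\gamma}<\alpha^{-1}(1+k)^{-\beta}$. It forces $(1+k)^{\beta}<\alpha^{-1}$, that is $k<\alpha^{-1/\beta}$, and then
\[
|\lambda|<\frac{\alpha^{-1/\gamma}(1+k)^{-\beta/\gamma}}{2k+n}.
\]
Integrating $|\lambda|^n\,d\lambda$ over this symmetric interval gives at most $C\,\alpha^{-(n+1)/\gamma}(1+k)^{-\beta(n+1)/\gamma}k^{-(n+1)}$.

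Multiplying by the weight $(1+k)^{2n-2}$ and summing over $k\ge1$, I expect
\[
\mu\{|m|>\alpha\}\le C\,\alpha^{-(n+1)/\gamma}\sum_{k\ge1}k^{\,n-3-\beta(n+1)/\gamma}.
\]
The series converges exactly when $n-3-\beta(n+1)/\gamma<-1$, i.e.\ $n-2<\frac{\beta}{\gamma}(n+1)$, which is the first hypothesis. I simply drop the cutoff $k<\alpha^{-1/\beta}$; if the series diverged one would have to keep it, but here that is unnecessary.

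It then follows that, for $0<\alpha<1$,
\[
\alpha\,\mu\{|m|>\alpha\}^{1/p-1/q}\le C\,\alpha^{\,1-\frac{n+1}{\gamma}\left(\frac1p-\frac1q\right)} .
\]
This is bounded on $(0,1)$ precisely when the exponent is nonnegative, which is the second hypothesis. Hence \eqref{condition on m} holds and Theorem \ref{Lp Lq boundedness theorem} gives the $L^p$--$L^q$ boundedness for $1<p\le2\le q<\infty$.

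The main obstacle is the bookkeeping of exponents in the per-ray $\lambda$-measure: the factor $(2k+n)^{-(n+1)}$ coming from the ray slope must be combined correctly with the Plancherel weight $(1+k)^{2n-2}$. It is this combination that produces the threshold $n-2$ in the first condition.
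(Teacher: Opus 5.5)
Your proposal is correct and follows the paper's own proof. You bound the $\mu$-measure of $\{|m|>\alpha\}$ ray by ray via $|\lambda|<\alpha^{-1/\gamma}(1+k)^{-\beta/\gamma}/(2k+n)$, then sum against the weight $\binom{k+n-1}{k}^2\asymp(1+k)^{2n-2}$ to get $C\alpha^{-(n+1)/\gamma}$ under $n-2<\frac{\beta}{\gamma}(n+1)$, and finally read the second condition off the power of $\alpha$ in Theorem \ref{Lp Lq boundedness theorem}. Treating the level-set computation as an upper bound, rather than as the equality the paper writes, is if anything the more accurate formulation.
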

\begin{proof}
   Observe that \(m\leq 1\). Therefore, it suffices to show that, under the assumptions of the theorem,
\begin{equation}
\sup_{0<\alpha<1}
\alpha
\left(
\int_{\{a\in \Omega:\,|m(a)|>\alpha\}}
d\mu(a)
\right)^{1/p-1/q}
<\infty.
\end{equation}

Let \(0<\alpha<1\). Note that if
\[
(1+(2k+n)|\lambda|)^{-\gamma}(1+k)^{-\beta}
>\alpha,
\]
then
\[
|\lambda|
<
\frac{\alpha^{-1/\gamma}}
{(2k+n)(1+k)^{\beta/\gamma}}.
\]
Using this estimate, we compute
    \begin{align*}
      \int_{\{a: |m(a)|>\alpha\}}d\mu(a)&= (2\pi)^{-2n-1} \sum_{k>1}(c_{n,k})^{-2} \int_{0}^{\frac{\alpha^{-1/\gamma}}{(2k+n)(1+k)^{\beta/\gamma}}}|\lambda|^n\,d\lambda\\
      &=(2\pi)^{-2n-1} 2 \sum_{k>1} \frac{(k+n-1)^{2(n-1)}}{(n+1)!^2(n+1)} \left(\frac{\alpha^{-\frac{1}{\gamma}}}{(2k+n)(1+k)^{\frac{\beta}{\gamma}}} \right)^{n+1},
    \end{align*}
    which is finite if and only if 
    $$
    n-3-\frac{\beta}{\gamma}(n+1)<-1.
    $$
    Subsequently, we are left with
    $$
    \sup_{0<\alpha<1} \alpha \left(\alpha^{-\frac{n+1}{\gamma}}\right)^{\frac{1}{p}-\frac{1}{q}}.
    $$
    The above quantity if finite if and only if 
    $$
    1-\frac{(n+1)}{\gamma}\left( \frac{1}{p}-\frac{1}{q} \right)\geq 0.
    $$
\end{proof}

\section{Application in PDEs}\label{sec4}
In this section, we employ the \(L^p\)-\(L^q\) boundedness of the operator \(T_m\) to study the well-posedness of certain nonlinear equations. Our approach is motivated by the works \cite{cardona-vishvesh-ruzhansky-multipliers,Vishvesh-ruzhsnaky-k-a_multipliers,shaimardan-tulenov-noncomm_torus}.

\noindent Throughout this section, \(\tau\) denotes the time variable.\\

\subsection{Nonlinear heat equations}

Consider the following two Cauchy problems in the space,\\ \(L^\infty(0,T;L^2(\mathbb{H}^n))\):
\begin{equation}\label{first heat equation}
u_{\tau}=|T_mu|^p,\qquad u(0)=u_0,
\end{equation}
and
\begin{equation}\label{second heat equation}
u_{\tau}+\mathcal{L}u+du=|T_mu|^p,\qquad u(0)=u_0,
\end{equation}
where \(T_m\) denotes the Fourier multiplier operator on \(L^2(\mathbb{H}^n)\) associated with a bounded function \(m\), \(1<p<\infty\), \(\mathcal{L}\) is the sublaplacian on the Heisenberg group, and \(d>0\).

A function \(u\in L^\infty(0,T;L^2(\mathbb{H}^n))\) is called a global solution of \eqref{first heat equation} if
\begin{equation}\label{solution of first heat equation}
u=u_0+\int_0^\tau |T_mu(s)|^p\,ds
\end{equation}
holds for every \(T<\infty\). Likewise, \(u\) is called a global solution of \eqref{second heat equation} if
\begin{equation}\label{solution of second heat equation}
u=e^{-\tau(\mathcal{L}+dI)}u_0
+\int_0^\tau e^{-(\tau-s)(\mathcal{L}+dI)}
|T_mu(s)|^p\,ds
\end{equation}
holds for every \(T<\infty\).

In either case, \(u\) is called a local solution if the corresponding integral equation holds for some \(T^\ast>0\).
\begin{theorem}\label{theorem for heat equations}
Let \(1<p<\infty\), and let \(T_m\) be the Fourier multiplier operator associated with a function \(m\) satisfying
\[
\sup_{\alpha>0}
\alpha
\left(
\int_{\{a\in \Omega:\,|m(a)|>\alpha\}}
d\mu(a)
\right)^{\frac{1}{2}-\frac{1}{2p}}
<\infty.
\]
Then the Cauchy problems \eqref{first heat equation} and \eqref{second heat equation} admit local solutions.
\end{theorem}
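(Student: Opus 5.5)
The plan is to apply Theorem \ref{Lp Lq boundedness theorem} with the exponent pair \((2,2p)\) and then run a standard energy (Gronwall-type) argument, following \cite{cardona-vishvesh-ruzhansky-multipliers,Vishvesh-ruzhsnaky-k-a_multipliers}. Since \(1<p<\infty\), we have \(2\le 2p<\infty\). The condition on \(m\) is exactly \eqref{condition on m} with \((p,q)\) replaced by \((2,2p)\), because \(\frac12-\frac1{2p}\) is the relevant exponent. Theorem \ref{Lp Lq boundedness theorem} only needs \(1<p\le 2\le q\), and the case \(p=2\) is allowed. Hence
\[
\|T_m v\|_{L^{2p}(\mathbb{H}^n)}\le C\|v\|_{L^2(\mathbb{H}^n)},
\qquad\text{so}\qquad
\bigl\||T_m v|^p\bigr\|_{L^2}=\|T_m v\|_{L^{2p}}^p\le C^p\|v\|_{L^2}^p .
\]
This is the key nonlinear estimate. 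It shows that the map \(v\mapsto |T_m v|^p\) sends \(L^2\) into \(L^2\) with power-type growth.

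\emph{Equation \eqref{first heat equation}.} Take the \(L^2\) norm in \eqref{solution of first heat equation} and use Minkowski's integral inequality:
\[
\|u(\tau)\|_2\le \|u_0\|_2+C^p\int_0^\tau \|u(s)\|_2^p\,ds .
\]
A priori, the function \(y(\tau)=\|u_0\|_2+C^p\int_0^\tau\|u\|_2^p\) satisfies \(y'\le C^p y^p\). This gives
\[
\|u(\tau)\|_2\le \bigl(\|u_0\|_2^{1-p}-(p-1)C^p\tau\bigr)^{-1/(p-1)},
\]
which stays finite for \(\tau<T^\ast:=\|u_0\|_2^{1-p}/((p-1)C^p)\). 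To actually construct the solution, set
\[
\Phi(u)=u_0+\int_0^\tau |T_m u(s)|^p\,ds
\]
on the ball \(\{\|u\|_{L^\infty(0,T;L^2)}\le 2\|u_0\|_2\}\). For small \(T\), \(\Phi\) maps this ball into itself by the bound above.

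\emph{Equation \eqref{second heat equation}.} Here we also need that \(e^{-\tau(\mathcal{L}+dI)}\) is a contraction on \(L^2(\mathbb{H}^n)\). This holds because \(\mathcal{L}\) is a nonnegative self-adjoint operator. On the Strichartz side the semigroup acts by multiplication by \(e^{-\tau((2k+n)|\lambda|+d)}\le 1\), so the Plancherel identity \eqref{plancherel} gives the contraction directly. The same estimate then holds with \(\|u_0\|_2\) replacing \(\|e^{-\tau(\mathcal L+d)}u_0\|_2\), and the same \(T^\ast\) works.

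The main obstacle is the fixed-point step, which needs a Lipschitz estimate for \(v\mapsto|T_mv|^p\) from \(L^2\) to \(L^2\). We would use the pointwise inequality
\[
\bigl||x|^p-|y|^p\bigr|\le p\,(|x|^{p-1}+|y|^{p-1})\,|x-y| .
\]
Combined with Hölder's inequality with exponents \(\frac{2p}{p-1}\) and \(2p\), it gives
\[
\bigl\||T_mv|^p-|T_mw|^p\bigr\|_2\le p\,\bigl(\|T_mv\|_{2p}^{p-1}+\|T_mw\|_{2p}^{p-1}\bigr)\|T_m(v-w)\|_{2p}
\le C'\bigl(\|v\|_2^{p-1}+\|w\|_2^{p-1}\bigr)\|v-w\|_2 ,
\]
where the last step again uses the \(L^2\to L^{2p}\) bound. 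This makes \(\Phi\) a contraction on the ball for small \(T\), and the Banach fixed point theorem gives a local solution in \(L^\infty(0,T^\ast;L^2(\mathbb{H}^n))\).

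If we want to stay closest to the a priori style of the cited works, it suffices to present the estimate
\[
\|u\|_{L^\infty(0,T;L^2)}\le \|u_0\|_2+T\,C^p\,\|u\|_{L^\infty(0,T;L^2)}^p
\]
and conclude local solvability for small \(T\).
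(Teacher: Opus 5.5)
Your proposal is correct and follows essentially the same route as the paper. Both use the $L^2\to L^{2p}$ bound from Theorem \ref{Lp Lq boundedness theorem} with the pair $(2,2p)$, then Minkowski's inequality with the contractive semigroup and the self-map estimate on a ball in $L^\infty(0,T;L^2(\mathbb{H}^n))$, then the Lipschitz bound via $\bigl||x|^p-|y|^p\bigr|\le p(|x|^{p-1}+|y|^{p-1})|x-y|$ and H\"older with exponents $\frac{2p}{p-1}$, $2p$, and finally Banach's fixed point theorem. The differences are cosmetic: you justify contractivity of $e^{-\tau(\mathcal{L}+dI)}$ through the Strichartz-side multiplier and Plancherel rather than Hille--Yosida, and the Gronwall-type bound is an optional extra; note that the a priori estimate in your last paragraph would not by itself give existence without the contraction step.
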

\begin{proof}
   We prove the existence of a local solution for \eqref{second heat equation}.

\noindent By Duhamel's principle, a solution of \eqref{second heat equation} is given by
\begin{equation}\label{form of general solution of second heat eqn}
\mathcal{K}(u)(\tau)
=
e^{-\tau(\mathcal{L}+dI)}u_0
+
\int_0^\tau
e^{-(\tau-s)(\mathcal{L}+dI)}
|T_mu(s)|^p\,ds.
\end{equation}

By the Hille--Yosida theorem \cite{semigroups-Goldstein}, the operator \(\mathcal{L}+dI\) generates a contractive semigroup on \(L^2(\mathbb{H}^n)\); see also \cite{Folland-subelliptic_estimates}.
Consequently, applying Minkowski's integral inequality in \eqref{form of general solution of second heat eqn}, we get 
   \begin{align}
       \left\lVert \int_{0}^\tau e^{-(\tau-s)(\mathcal{L}+dI)}|T_mu(s)|^p\,ds\right\rVert_{L^2(\mathbb{H}^n)} & \leq  \int_{0}^\tau  \left\lVert e^{-(\tau-s)(\mathcal{L}+dI)}\left(|T_mu(s)|^p \right)\right\rVert_{L^2(\mathbb{H}^n)}\,ds\nonumber\\
       &\leq  \int_{0}^\tau  \left\lVert |T_mu(s)|^p \right\rVert_{L^2(\mathbb{H}^n)}\,ds\nonumber\\
       &= \int_{0}^\tau  \left\lVert T_mu(s)\right\rVert_{L^{2p}(\mathbb{H}^n)}^p\,ds\nonumber\\
       &\leq C \int_{0}^\tau  \left\lVert u(s)\right\rVert_{L^2(\mathbb{H}^n)}^p\,ds, \label{estimate in solution of second heat eqn}
   \end{align}
  where, in the last inequality, we used the fact that \(T_m:L^2(\mathbb{H}^n)\to L^{2p}(\mathbb{H}^n)\) is bounded under the given hypothesis. Combining \eqref{form of general solution of second heat eqn} and \eqref{estimate in solution of second heat eqn}, we obtain
\[
\|\mathcal{K}(u)\|_{L^\infty(0,T;L^2(\mathbb{H}^n))}
\leq
\|u_0\|_{L^2(\mathbb{H}^n)}
+
CT
\|u\|_{L^\infty(0,T;L^2(\mathbb{H}^n))}^{\,p},
\]
for some constant \(C>0\).

\noindent For \(\delta>1\), define
\[
S_\delta
=
\left\{
u\in L^\infty(0,T;L^2(\mathbb{H}^n)):
\|u\|_{L^\infty(0,T;L^2(\mathbb{H}^n))}
\leq
\delta\|u_0\|_{L^2(\mathbb{H}^n)}
\right\}.
\]

Clearly, \(S_\delta\) is a closed subset of \(L^\infty(0,T;L^2(\mathbb{H}^n))\). We shall show that, for a suitable choice of \(T=T^\ast\), the operator \(\mathcal{K}\) is a contraction on \(S_\delta\) and maps \(S_\delta\) into itself. The existence of a local solution then follows from the Banach fixed point theorem.

\noindent Let \(u\in S_\delta\). Then
\[
\|\mathcal{K}(u)\|_{L^\infty(0,T;L^2(\mathbb{H}^n))}
\leq
\|u_0\|_{L^2(\mathbb{H}^n)}
+
CT\delta^p\|u_0\|_{L^2(\mathbb{H}^n)}^p.
\]
Hence, whenever
\[
T
\leq
\frac{\delta-1}
{C\delta^p\|u_0\|_{L^2(\mathbb{H}^n)}^{p-1}},
\]
we obtain
\[
\|\mathcal{K}(u)\|_{L^\infty(0,T;L^2(\mathbb{H}^n))}
\leq
\delta\|u_0\|_{L^2(\mathbb{H}^n)}.
\]
Therefore, \(\mathcal{K}(S_\delta)\subseteq S_\delta\). It remains to show that \(\mathcal{K}\) is a contraction on \(S_\delta\).

\noindent Let \(u,v\in S_\delta\). Then
\begin{align*}
&\left\|
\int_0^\tau
e^{-(\tau-s)(\mathcal{L}+dI)}
\left(
|T_mu(s)|^p-|T_mv(s)|^p
\right)\,ds
\right\|_{L^2(\mathbb{H}^n)}
\\
&\leq
\int_0^\tau
\left\|
|T_mu(s)|^p-|T_mv(s)|^p
\right\|_{L^2(\mathbb{H}^n)}
\,ds
\\
&\leq
p\int_0^\tau
\left\|
|T_mu(s)|^{p-1}
+
|T_mv(s)|^{p-1}
\right\|_{L^{\frac{2p}{p-1}}(\mathbb{H}^n)}
\left\|
T_m(u(s)-v(s))
\right\|_{L^{2p}(\mathbb{H}^n)}
\,ds
\\
&\leq
p\int_0^\tau
\Big(
\|T_mu(s)\|_{L^{2p}(\mathbb{H}^n)}^{p-1}
+
\|T_mv(s)\|_{L^{2p}(\mathbb{H}^n)}^{p-1}
\Big)
\|T_m(u(s)-v(s))\|_{L^{2p}(\mathbb{H}^n)}
\,ds
\\
&\leq
pC^p
\int_0^\tau
\Big(
\|u(s)\|_{L^2(\mathbb{H}^n)}^{p-1}
+
\|v(s)\|_{L^2(\mathbb{H}^n)}^{p-1}
\Big)
\|u(s)-v(s)\|_{L^2(\mathbb{H}^n)}
\,ds.
\end{align*}

Hence,
\[
\|\mathcal{K}(u)-\mathcal{K}(v)\|_{L^\infty(0,T;L^2(\mathbb{H}^n))}
\leq
2pC^p\delta^{p-1}
T
\|u_0\|_{L^2(\mathbb{H}^n)}^{p-1}
\|u-v\|_{L^\infty(0,T;L^2(\mathbb{H}^n))}.
\]
Therefore, whenever
\[
T<T^\ast
=
\min
\left\{
\frac{\delta-1}
{C\delta^p\|u_0\|_{L^2(\mathbb{H}^n)}^{p-1}},
\,
\frac{1}
{2pC^p\delta^{p-1}
\|u_0\|_{L^2(\mathbb{H}^n)}^{p-1}}
\right\},
\]
the operator \(\mathcal{K}\) is a contraction on \(S_\delta\). Consequently, by the Banach fixed point theorem, \eqref{second heat equation} admits a local solution.
\end{proof}

\subsection{Nonlinear wave equations}

Next, we consider the following nonlinear wave equations in the space \(L^\infty(0,T;L^2(\mathbb{H}^n))\):
\begin{equation}\label{first wave equation}
u_{\tau\tau}=b(\tau)|T_mu|^p,\qquad 
u(0)=u_0,\quad u_\tau(0)=u_1,
\end{equation}
and
\begin{equation}\label{second wave equation}
u_{\tau\tau}+u_\tau=|T_mu|^p,\qquad 
u(0)=u_0,\quad u_\tau(0)=u_1,
\end{equation}
where \(T_m\) denotes the Fourier multiplier operator on \(L^2(\mathbb{H}^n)\) associated with a bounded function \(m\), \(1<p<\infty\), and \(b\) is a positive bounded function.

A function \(u\in L^\infty(0,T;L^2(\mathbb{H}^n))\) is called a global solution of \eqref{first wave equation} if
\begin{equation}\label{solution of first wave equation}
u=u_0+\tau u_1
+\int_0^\tau (\tau-s)b(s)|T_mu(s)|^p\,ds
\end{equation}
holds for every \(T<\infty\). Likewise, \(u\) is called a global solution of \eqref{second wave equation} if
\begin{equation}\label{solution of second wave equation}
u=u_0+(1-e^{-\tau})u_1
+\int_0^\tau (1-e^{-(\tau-s)})|T_mu(s)|^p\,ds
\end{equation}
holds for every \(T<\infty\).

In either case, \(u\) is called a local solution if the corresponding integral equation holds for some \(T^\ast>0\).

The following theorem establishes the well-posedness of the above nonlinear wave equations.
\begin{theorem}
Let \(1<p<\infty\), and let \(T_m\) be the Fourier multiplier operator associated with a function \(m\) satisfying
\[
\sup_{\alpha>0}
\alpha
\left(
\int_{\{a\in \Omega:\,|m(a)|>\alpha\}}
d\mu(a)
\right)^{\frac{1}{2}-\frac{1}{2p}}
<\infty.
\]
Then the following assertions hold:

\begin{enumerate}
\item The Cauchy problem \eqref{first wave equation} admits a local solution in \(L^\infty(0,T;L^2(\mathbb{H}^n))\) provided \(b\in L^2(0,T)\).

\item Assume that \(u_1=0\) and \(\gamma>3/2\). Suppose further that
\[
\|b\|_{L^2(0,T)}
\leq
cT^{-\gamma},
\qquad T>0,
\]
where \(c\) is a constant independent of \(T\). Then \eqref{first wave equation} admits a global solution in \(L^\infty(0,T;L^2(\mathbb{H}^n))\) for sufficiently small initial data \(u_0\).

\item The Cauchy problem \eqref{second wave equation} admits a local solution in \(L^\infty(0,T;L^2(\mathbb{H}^n))\).
\end{enumerate}
\end{theorem}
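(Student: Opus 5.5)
The plan is to mirror the proof of Theorem \ref{theorem for heat equations}. The key input is again Theorem \ref{Lp Lq boundedness theorem} with the pair $(2,2p)$. Since $1/2-1/(2p)$ is exactly the exponent in the hypothesis, $T_m$ is bounded from $L^2(\mathbb{H}^n)$ to $L^{2p}(\mathbb{H}^n)$. Consequently
\[
\bigl\||T_mu|^p\bigr\|_{L^2}=\|T_mu\|_{L^{2p}}^p\le C\|u\|_{L^2}^p ,
\]
and, by the same Hölder argument as in the heat case,
\[
\bigl\||T_mu|^p-|T_mv|^p\bigr\|_{L^2}\le pC^p\bigl(\|u\|_{L^2}^{p-1}+\|v\|_{L^2}^{p-1}\bigr)\|u-v\|_{L^2}.
\]
Each part then becomes a fixed-point argument for the Duhamel map given by \eqref{solution of first wave equation} or \eqref{solution of second wave equation}. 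We work on a ball $S_\delta$ of $L^\infty(0,T;L^2(\mathbb{H}^n))$ of radius comparable to the size of the data.

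For (1), define $\mathcal{K}(u)(\tau)=u_0+\tau u_1+\int_0^\tau(\tau-s)b(s)|T_mu(s)|^p\,ds$. Minkowski's inequality, the bound $|\tau-s|\le T$, and Cauchy--Schwarz in $s$ give
\[
\|\mathcal{K}(u)\|_{L^\infty L^2}\le \|u_0\|_2+T\|u_1\|_2+C\,T^{3/2}\|b\|_{L^2(0,T)}\|u\|_{L^\infty L^2}^p .
\]
Take $S_\delta$ to be the ball of radius $\delta(\|u_0\|_2+T_0\|u_1\|_2)$ for a fixed $T_0$. Restricting to $T\le T_0$ small, the factor $T^{3/2}\|b\|_{L^2(0,T)}\to0$ forces $\mathcal{K}(S_\delta)\subseteq S_\delta$. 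The difference estimate, with the same prefactor $T^{3/2}\|b\|_{L^2(0,T)}$, makes $\mathcal{K}$ a contraction. Banach's fixed point theorem then gives a local solution.

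For (3) the argument is identical, with kernel $1-e^{-(\tau-s)}\in[0,1]$ and $|1-e^{-\tau}|\le1$. This gives
\[
\|\mathcal{K}(u)\|\le\|u_0\|_2+\|u_1\|_2+CT\|u\|^p ,
\]
and the proof is verbatim the heat-equation proof with $\|u_0\|_2$ replaced by $\|u_0\|_2+\|u_1\|_2$.

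For (2), where $u_1=0$, the same estimate reads
\[
\|\mathcal{K}(u)\|_{L^\infty(0,T;L^2)}\le\|u_0\|_2+C\,T^{3/2}\|b\|_{L^2(0,T)}\|u\|^p\le\|u_0\|_2+Cc\,T^{3/2-\gamma}\|u\|^p .
\]
Since $\gamma>3/2$, the factor $T^{3/2-\gamma}$ is bounded for $T$ away from $0$, and it decays as $T\to\infty$. On $S_\delta$ (radius $\delta\|u_0\|_2$) invariance requires $Cc\,T^{3/2-\gamma}\delta^p\|u_0\|_2^{p-1}\le\delta-1$. Contraction requires $2pC^pc\,T^{3/2-\gamma}\delta^{p-1}\|u_0\|_2^{p-1}<1$.

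The main obstacle is making these conditions hold uniformly in $T$. The hypothesis as stated bounds $\|b\|_{L^2(0,T)}$ by $cT^{-\gamma}$, which blows up as $T\to0$, while $T^{3/2}\|b\|_{L^2(0,T)}$ stays bounded above only when combined with the trivial monotonicity $\|b\|_{L^2(0,T)}\le\|b\|_{L^2(0,T')}$ for $T\le T'$. I would therefore handle small $T$ by this monotonicity together with the local argument of (1), and large $T$ by the decay of $T^{3/2-\gamma}$. Together these give $\sup_{T>0}T^{3/2}\|b\|_{L^2(0,T)}=:B<\infty$. Choosing $\|u_0\|_2$ so small that $CB\delta^p\|u_0\|_2^{p-1}\le\delta-1$ and $2pC^pB\delta^{p-1}\|u_0\|_2^{p-1}<1$, the fixed point exists on every interval $[0,T]$. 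By uniqueness these fixed points agree on overlaps, which yields a global solution.
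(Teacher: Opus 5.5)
Your proposal is correct and follows essentially the paper's route. Part (3) is exactly the paper's argument: Minkowski's inequality, the $L^2\to L^{2p}$ bound for $T_m$, and the heat-equation fixed point on $S_\delta$ with radius $\delta(\|u_0\|_2+\|u_1\|_2)$. Parts (1)--(2), which the paper omits by deferring to the $(k,a)$-multiplier reference, rest there on the same Duhamel estimate producing the factor $T^{3/2}\|b\|_{L^2(0,T)}$ that you derive. Your monotonicity fix for small $T$ in (2) is sound, and it in fact proves more: letting $T'\to\infty$ in $\|b\|_{L^2(0,T)}\le\|b\|_{L^2(0,T')}\le cT'^{-\gamma}$ shows that the hypothesis forces $b=0$ a.e. Hence $B=0$, and part (2) is degenerate (indeed vacuous for a positive $b$).
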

\begin{proof}
The proofs of parts \((1)\) and \((2)\) follow verbatim from \cite{Vishvesh-ruzhsnaky-k-a_multipliers}, and hence are omitted. We therefore restrict ourselves to proving the existence of a local solution for \eqref{second wave equation}.

\noindent The solution of \eqref{second wave equation} admits the form 
\begin{equation}\label{form of solution of second wave equation}
    \mathcal{K}(u):=u(\tau)=u_0+(1-e^{-\tau})u_1+\int_0^{\tau}(1-e^{-(\tau-s)})|T_mu(s)|^p\,ds.
\end{equation}
Now applying Minkowski's inequality in the above, we get 
\begin{align}
    \left\lVert \mathcal{K}(u) \right\rVert_{L^2\mathbb{H}^n} & \leq \left \lVert u_0 \right \rVert_{L^2(\mathbb{H}^n)}+ \left \lVert u_1 \right \rVert_{L^2(\mathbb{H}^n)}+ \left \lVert \int_0^\tau (1-e^{-(\tau-s)})\left(|T_mu(s)|^p \right) \right \rVert_{L^2(\mathbb{H}^n)}\,ds\nonumber\\ 
    &\leq \left \lVert u_0 \right \rVert_{L^2(\mathbb{H}^n)}+ \left \lVert u_1 \right \rVert_{L^2(\mathbb{H}^n)}+\int_0^\tau  \left \lVert |T_mu(s)|^p \right \rVert_{L^2(\mathbb{H}^n)}\,ds\nonumber\\
    &\leq \left \lVert u_0 \right \rVert_{L^2(\mathbb{H}^n)}+ \left \lVert u_1 \right \rVert_{L^2(\mathbb{H}^n)}+\int_0^\tau  \left \lVert T_mu(s) \right \rVert^p_{L^{2p}(\mathbb{H}^n)}\,ds\nonumber\\
    &\leq \left \lVert u_0 \right \rVert_{L^2(\mathbb{H}^n)}+ \left \lVert u_1 \right \rVert_{L^2(\mathbb{H}^n)}+C\int_0^\tau  \left \lVert u(s) \right \rVert^p_{L^{2}(\mathbb{H}^n)}\,ds,
\end{align}
where in the last inequality, we used that by the given hypothesis on \(m\), \(T_m: L^{2}(\mathbb{H}^n) \to L^{2p}(\mathbb{H}^n)\) is a bounded operator. Thus, we have 
we have 
   \begin{equation*}
       \lVert \mathcal{K}(u) \rVert_{L^{\infty}(0,T,L^2(\mathbb{H}^n))}\leq \lVert u_0 \rVert_{L^2(\mathbb{H}^n)}+ \lVert u_1 \rVert_{L^2(\mathbb{H}^n)} +CT\left\lVert u\right\rVert_{L^{\infty}(0,T,L^2(\mathbb{H}^n))}^p,
   \end{equation*}
 \noindent   For \(\delta>1\), consider the set 
\[S_\delta=\left\{ u \in L^{\infty}(0,T,L^2(\mathbb{H}^n)):\left\lVert u\right\rVert_{L^{\infty}(0,T,L^2(\mathbb{H}^n))}\leq \delta  \left(\lVert u_0 \rVert_{L^2(\mathbb{H}^n)}+\lVert u_1\rVert_{L^2(\mathbb{H}^n)} \right)\right\}.\]
\noindent Next, continuing as in the case of Theorem \ref{theorem for heat equations}, we can obtain the desired result.

\end{proof}

\section{\(L^p-L^p\) boundedness}\label{sec5}

This section is devoted to the study of the \(L^p\)-\(L^p\) boundedness of Fourier multiplier operators in the range \(1<p<\infty\).

Recall that, on \(\mathbb{R}^n\), the Fourier multiplier operator associated with a bounded function \(\sigma\) is defined by 
\begin{equation*}
    \widehat{T_{\sigma}f}(\xi)=\sigma(\xi)\widehat{f}(\xi),\;\; \xi \in \mathbb{R}^n.
\end{equation*}
By the Plancherel theorem, it is immediate that \(T_\sigma\) is bounded on \(L^2(\mathbb{R}^n)\). H\"ormander \cite{hormander-estimates_of_translation} also established sufficient conditions for the \(L^p\)-boundedness of \(T_\sigma\). More precisely, he proved that if \(k\geq n/2+1\), and \(\sigma\in C^k(\mathbb{R}^n\setminus\{0\})\) satisfies
\[
\sup_{R>0}
R^{|\beta|-\frac{n}{2}}
\left(
\int_{R<|\xi|<2R}
|D^\beta \sigma(\xi)|^2\,d\xi
\right)^{1/2}
\leq C
\]
for all multi-indices \(\beta\) with \(|\beta|\leq k\), then \(T_\sigma\) extends to a bounded operator on \(L^p(\mathbb{R}^n)\), \(1<p<\infty\).

We now consider the operator \(T_m\) on \(L^p(\mathbb{H}^n)\), as defined in \eqref{operator T_m}. We begin by showing that \(T_m\) can be realized as a convolution operator. 
\begin{align}
T_mf(z,t)
&=\mathcal{F}^{-1}(m\mathcal{F}(f))(z,t)\nonumber\\
&=
\int_{\Omega}
\int_{\mathbb{C}^n}
c_{n,k}\,
m(\lambda,(2k+n)|\lambda|)
\mathcal{F}(f)(\lambda,(2k+n)|\lambda|,w)
e_a\big((-w,0)(z,t)\big)
\,dw\,d\nu_2(a)
\nonumber\\
&=
\int_{\mathbb{H}^n}
f(u,s)
\Bigg(
\int_{\Omega}
\int_{\mathbb{C}^n}
c_{n,k}^2
m(\lambda,(2k+n)|\lambda|)
\varphi^{n-1}_{k,\lambda}(-u+w)
\varphi^{n-1}_{k,\lambda}(-w+z)
\nonumber\\
&\hspace{4cm}\times
e^{i\lambda\left(t-s-\frac12\operatorname{Im}(w\bar z+u\bar w)\right)}
\,dw\,d\nu_2(a)
\Bigg)
\,du\,ds
\nonumber\\
&=
(2\pi)^n
\int_{\mathbb{H}^n}
f(u,s)
\Bigg(
\int_{\Omega}
c_{n,k}^2
|\lambda|^{-n}
m(\lambda,(2k+n)|\lambda|)
\varphi^{n-1}_{k,\lambda}(z-u)
\nonumber\\
&\hspace{4cm}\times
e^{i\lambda\left(t-s-\frac12\operatorname{Im}(u\bar z)\right)}
\,d\nu_2(a)
\Bigg)
\,du\,ds.
\end{align}
If we define the kernel \(K\) by
\begin{align}
K(z,t)
&=
(2\pi)^n
\int_{\Omega}
c_{n,k}^2
|\lambda|^{-n}
m(\lambda,(2k+n)|\lambda|)
\varphi^{n-1}_{k,\lambda}(z)
e^{i\lambda t}
\,d\nu_2(a)
\nonumber\\
&=
(2\pi)^{-n-1}
\int_{-\infty}^{\infty}
\sum_{k=0}^{\infty}
|\lambda|^n
m(\lambda,(2k+n)|\lambda|)
\varphi^{n-1}_{k,\lambda}(z)
e^{i\lambda t}
\,d\lambda,
\label{expression of the kernel}
\end{align}
then \(T_m\) admits the convolution representation
\[
T_mf=f\ast K.
\]

Having obtained an explicit expression for the convolution kernel \(K(z,t)\), we now relate the operator \(T_m\) to the framework developed in \cite{Muller-Marcinkiewicz_multipliers-I}.

\noindent Following the terminology of \cite{Muller-Marcinkiewicz_multipliers-I}, for \(k\geq 1\), define
\[
\Delta m(\lambda,(2k+n)|\lambda|)
=
m(\lambda,(2k+n)|\lambda|)
-
m(\lambda,(2(k-1)+n)|\lambda|).
\]

We now apply \cite[Theorem 2.2]{Muller-Marcinkiewicz_multipliers-I}. The authors of \cite{Muller-Marcinkiewicz_multipliers-I} studied joint multiplier operators on product groups. In the case of the Heisenberg group, they considered multipliers associated with the sublaplacian \(\mathcal{L}\) and the central vector field \(T\), and established the \(L^p\)-boundedness of the corresponding convolution operators.

Observe that the convolution kernel \(K(z,t)\) obtained in \eqref{expression of the kernel} coincides with the kernel associated with the operator considered in \cite[Theorem 2.2]{Muller-Marcinkiewicz_multipliers-I}. Consequently, we obtain the following theorem.
\begin{theorem}
Let \(m\) be a bounded function on \(\Omega\). Suppose that
\[
\left|
(k\Delta)^{\alpha}
\left(
\lambda\frac{\partial}{\partial\lambda}
\right)^{\beta}
m(\lambda,(2k+n)|\lambda|)
\right|
\leq C,
\]
for all integers \(\alpha,\beta\leq N\), where \(N\) is sufficiently large. Then the operator \(T_m\) extends to a bounded operator on \(L^p(\mathbb{H}^n)\) for every \(1<p<\infty\).
\end{theorem}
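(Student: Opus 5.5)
The plan is to reduce the statement to the Marcinkiewicz-type multiplier theorem of M\"uller, Ricci and Stein, \cite[Theorem 2.2]{Muller-Marcinkiewicz_multipliers-I}, through the convolution representation $T_mf=f\ast K$ with $K$ as in \eqref{expression of the kernel}.

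\textbf{Step 1: identify $T_m$ as a joint spectral multiplier.} For $a=(\lambda,(2k+n)|\lambda|)\in R_k$, the functions $e^{n-1}_{k,\lambda}$ are joint eigenfunctions of $\mathcal{L}$ and $-iT$ with eigenvalues $(2k+n)|\lambda|$ and $\lambda$. Define $M(\lambda,\xi)$ on the fan by $M(\lambda,(2k+n)|\lambda|)=m(\lambda,(2k+n)|\lambda|)$. Expanding $K$ in \eqref{expression of the kernel} as
\[
K(z,t)=(2\pi)^{-n-1}\int_{-\infty}^{\infty}\sum_{k}M(\lambda,(2k+n)|\lambda|)\,|\lambda|^n\varphi^{n-1}_{k,\lambda}(z)e^{i\lambda t}\,d\lambda ,
\]
one recognises $(2\pi)^{-n-1}|\lambda|^n\varphi^{n-1}_{k,\lambda}(z)e^{i\lambda t}\,d\lambda$ as the joint spectral measure of $(\mathcal{L},-iT)$ on the ray $R_k$. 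This is the standard expansion of the kernel of $M(\mathcal{L},-iT)$ in Laguerre functions; see \cite{thangaveluheisenberg}. Hence $T_m=M(\mathcal{L},-iT)$ on $\mathcal{S}(\mathbb{H}^n)$, and the ray $R_\infty$ is irrelevant because it carries no $\nu_2$-measure. As a sanity check, I would test this identification with $m\equiv1$ (giving the identity) and with $m=e^{-s(2k+n)|\lambda|}$ (giving the heat kernel), to confirm that the normalisations $c_{n,k}$ and $(2\pi)^n|\lambda|^{-n}$ cancel as claimed.

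\textbf{Step 2: translate the hypothesis.} \cite[Theorem 2.2]{Muller-Marcinkiewicz_multipliers-I} asks for Marcinkiewicz-type conditions on $M$ restricted to the fan, namely uniform bounds on mixed differences in $k$ and dilation-invariant derivatives in $\lambda$, of the form $|(k\Delta)^\alpha(\lambda\partial_\lambda)^\beta M|\le C$ for $\alpha,\beta$ up to some order $N$ depending on $n$. Here $\Delta$ is the backward difference in $k$ along fixed $\lambda$, exactly as defined just before the statement. I would check that the operators $\lambda\partial_\lambda$ and $k\Delta$ in our parametrisation coincide with theirs. Their formulation may use the variables $(\lambda,(2k+n)|\lambda|)$ or $(\lambda,k)$; since $\lambda\partial_\lambda$ at fixed $k$ is homogeneous, the two forms agree up to harmless constants. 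With this, the assumed bounds are precisely their hypothesis.

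\textbf{Step 3: conclude.} Their theorem yields $\|f\ast K\|_p\le C_p\|f\|_p$ for $f\in\mathcal{S}(\mathbb{H}^n)$ and $1<p<\infty$, and density extends $T_m$ boundedly to $L^p(\mathbb{H}^n)$.

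\textbf{Main obstacle.} The delicate point is Step 1: verifying rigorously that the kernel coming from the Strichartz inversion formula \eqref{inversionformula} equals the kernel of the joint functional calculus. This requires the twisted-convolution reproducing identity $\varphi^{n-1}_{k,\lambda}\ast_\lambda\varphi^{n-1}_{k,\lambda}=(2\pi)^n|\lambda|^{-n}\varphi^{n-1}_{k,\lambda}$, used in the $w$-integration that produced \eqref{expression of the kernel}, together with justification of the interchange of sums and integrals. I would handle the interchange first for $f$ and $m$ nice, for instance $m$ compactly supported in $k$ and $\lambda$ away from $0$, and then pass to the limit using $L^2$-boundedness from the Plancherel identity \eqref{plancherel}.
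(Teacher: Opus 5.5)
Your proposal is correct and follows essentially the same route as the paper: derive the convolution kernel $K$ in \eqref{expression of the kernel} from the inversion formula, recognise it as the kernel of the joint spectral multiplier of $(\mathcal{L},-iT)$, and invoke \cite[Theorem 2.2]{Muller-Marcinkiewicz_multipliers-I}. The checks you add (the normalisation with $m\equiv 1$, the twisted-convolution reproducing identity, the justification of interchanging integrals, and the matching of $k\Delta$ and $\lambda\partial_\lambda$ with the cited hypothesis) spell out what the paper asserts in a single line.
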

\section*{Acknowledgement}
    The second author gratefully acknowledge the Indian Institute of Technology Delhi for providing the Institute Assistantship.

\section*{Conflict of Interest}
The authors declare that they have no competing interests.

\bibliographystyle{acm}
\bibliography{ref_multipliers}
\end{document}